\newcommand{\C}{\mathbb{C}}
\newcommand{\Z}{\mathbb{Z}}
\newcommand{\R}{\mathbb{R}}
\newcommand{\Hy}{\mathbb{H}}
\newcommand{\SL}{\mathrm{SL}}
\newcommand{\PSLZ}{\operatorname{PSL}\left( \mathbb Z \right)}
\newcommand{\cpone}{\mathbb P^1}
\newcommand{\cpminus}{\cpone-\{0,1,\infty\}}
\newcommand{\D}{\operatorname{D}}
\newcommand{\dz}{\od {} {z}}
\newcommand{\bigslant}[2]{{\raisebox{.2em}{$#1$}\left/\raisebox{-.2em}{$#2$}\right.}}
\newcommand{\E}[1]{e^{2i\pi#1}}
\newcommand{\Em}[1]{e^{-2i\pi#1}}
\newcommand{\Es}[1]{E \left( #1 \right)}
\newcommand{\Dx}{\mathcal D_{\overline{\mathcal C}}}
\newcommand{\gr}{\operatorname{gr}}
\newcommand{\holb}{\mathcal E}
\newcommand{\Id}{\operatorname{Id}}
\newtheorem{theorem}{Theorem}[section]
\newtheorem*{theorem*}{Theorem}
\newtheorem{lemma}[theorem]{Lemma}
\newtheorem*{lemma*}{Lemma}
\newtheorem{proposition}[theorem]{Proposition}
\newtheorem{corollary}[theorem]{Corollary}
\newtheorem*{corollary*}{Corollary}
\newtheorem{definition}[theorem]{Definition}
\newtheorem*{remark}{Remark}
\title{Parabolic degrees and Lyapunov exponents for hypergeometric local systems}
\author{Charles Fougeron}
\begin{document}
\maketitle

\begin{abstract} 
  Consider the flat bundle on $\cpminus$ corresponding to solutions of
  the hypergeometric differential equation
  $$ \prod_{i=1}^h (\D - \alpha_i) - z \prod_{j=1}^h (\D - \beta_j) = 0, \; \text{where} \ \D = z \dz $$
  For $\alpha_i$ and $\beta_j$ real numbers, this bundle is known to
  underlie a complex polarized variation of Hodge structure.  Setting
  the complete hyperbolic metric on $\cpminus$, we associate $n$
  Lyapunov exponents to this bundle.  We compute the parabolic degrees
  of the holomorphic subbundles induced by the variation of Hodge
  structure and study the dependence of the Lyapunov exponents in
  terms of these degrees by means of numerical simulations.
\end{abstract}

\section{Introduction}

Oseledets decomposition of flat bundles over an ergodic dynamical
system is often referred to as \textit{dynamical variation of Hodge
  structure}.  In the case of Teichmüller dynamics, both Oseledets
decomposition and a variation of Hodge structure (VHS) appear. Two
decades ago it was observed in \cite{Kontsevich} that these structures
were linked, their invariants are related: the sum of the Lyapunov
exponents associated to a Teichmüller curve equals the normalized
degree of the Hodge bundle.  This formula was studied extensively and
extended to strata of abelian and quadratic differentials from then
(see \cite{Forni}, \cite{Krikorian}, \cite{BouwMoller}, \cite{EKZ}).
Soon this link was observed in other settings.  In \cite{KappesMoller}
it was used as a new invariant to classify hyperbolic structures and
distinguish Deligne-Mostow's non-arithmetic lattices in $\SL_2(\C)$.
In \cite{Filip} a similar formula was observed for higher weight
variation of Hodge structures.  The leitmotiv in this work is the
study of the relationship between theses two structures in a broad
class of examples with arbitrary weight.  This family of examples will
be given by hyperelliptic differential equations which yield a flat
bundle endowed with a variation of Hodge structure over the sphere
with three punctures.  A recent article \cite{EKMZ} shows that the
degrees of holomorphic flags of the Hodge filtration bound by below
the sum of Lyapunov exponents.  Our investigation will start by
computing these degrees and then explore the behaviour of Lyapunov
exponents through numerical simulations and their distance to the
latter lower bounds. This will enable us to bring out some simple
algebraic
relations under which there is a conjectural equality.\\


\paragraph{\bf{Hypergeometric equations.}}
Let $\alpha_1, \alpha_2, \dots, \alpha_n$ and
$\beta_1, \beta_2, \dots, \beta_n$ be two disjoint sequences of $n$
real numbers.  We define the \textit{hypergeometric differential
  equation} corresponding to those parameters

\begin{equation}
  \label{hypergeometric}
  \prod_{i=1}^n (\D - \alpha_i) - z \prod_{j=1}^n (\D - \beta_j) = 0, \; \text{where} \ \D = z \dz
\end{equation}

\noindent This equation originates from a large class of special
functions called \textit{generalized hypergeometric functions} which
satisfies
it. For more details about these functions see for example \cite{MyLove}.\\

\noindent It is an order $n$ differential equation with three
singularities at 0, 1 and $\infty$ hence the space of solutions is
locally a dimension $n$ vector space away from singularities and can
be seen in a geometrical way as a flat bundle over $\cpminus$.  This
flat bundle is completely described by its monodromy matrices around
singularities.  We will be denoting monodromies associated to simple
closed loop going counterclockwise around $0, 1$ and $\infty$ by
$M_0, M_1$ and $M_\infty$. We get a first relation between these
matrices observing that composing the three loops in the same order
will give a trivial loop: $M_\infty M_1 M_0 = \Id$.  The eigenvalues
of $M_0$ and $M_\infty$ can be expressed with parameters of the
\textit{hypergeometric equation} (\ref{hypergeometric}) and $M_1$ has
a very specific form as stated in the following proposition.

\begin{proposition}
  \label{eigenvalues}
  For any two sequences of real numbers $\alpha_1, \dots, \alpha_n$
  and $\beta_1, \dots, \beta_n$,
  \begin{itemize}
  \item $M_0$ has eigenvalues $\E{\alpha_1}, \dots, \E{\alpha_n}$
  \item $M_\infty$ has eigenvalues $\Em{\beta_1}, \dots, \Em{\beta_n}$
  \item $M_1$ is the identity plus a matrix of rank one
  \end{itemize}
\end{proposition}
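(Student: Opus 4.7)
I would read off the eigenvalues of each local monodromy from the characteristic exponents at the corresponding singular point, computed via the standard indicial equation of a Fuchsian ODE.

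\textbf{At $z=0$ and at $z=\infty$.} Write $L = L_0 - zL_2$ with $L_0 := \prod_i(\D-\alpha_i)$ and $L_2 := \prod_j(\D-\beta_j)$. Since $L_0(z^s) = \prod_i(s-\alpha_i)z^s$ while $zL_2(z^s) = O(z^{s+1})$, the indicial polynomial at $z=0$ is $\prod_i(s-\alpha_i)$, and the Frobenius solutions $z^{\alpha_i}(1+O(z))$ pick up the factors $e^{2i\pi\alpha_i}$ under $M_0$. At $z=\infty$, passing to $w=1/z$ gives $\D = -\D_w$ with $\D_w := w\,d/dw$, and multiplying the equation through by $w$ yields the symmetric form $\prod_j(\D_w+\beta_j)f = w\prod_i(\D_w+\alpha_i)f$. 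The same argument gives indicial polynomial $\prod_j(s+\beta_j)$ at $w=0$; using the convention that the counterclockwise loop around $\infty$ corresponds to $w\mapsto w e^{2i\pi}$ in the chart at infinity, the $M_\infty$-eigenvalues are $e^{-2i\pi\beta_j}$ (consistent with $\det(M_\infty M_1 M_0)=1$ using the exponent at $1$ found below).

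\textbf{At $z=1$.} The key algebraic observation is the decomposition
\[
L = L_1 + (1-z)L_2, \qquad L_1 := \prod_i(\D-\alpha_i) - \prod_j(\D-\beta_j).
\]
The $\D^n$ terms of the two products cancel, so $L_1$ has order at most $n-1$ with $\D^{n-1}$-coefficient $c := \sum_j\beta_j - \sum_i\alpha_i$. Using $\D^k((z-1)^s) = s(s-1)\cdots(s-k+1)(z-1)^{s-k} + O((z-1)^{s-k+1})$, the coefficient of the leading power $(z-1)^{s-n+1}$ in $L((z-1)^s)$ is
\[
c\,s(s-1)\cdots(s-n+2) \;-\; s(s-1)\cdots(s-n+1) \;=\; s(s-1)\cdots(s-n+2)\,(\gamma-s),
\]
with $\gamma := n-1+\sum_j\beta_j-\sum_i\alpha_i$. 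The indicial roots at $z=1$ are therefore $0, 1, \dots, n-2$ and $\gamma$. Since $n-1$ of these are non-negative integers, one obtains $n-1$ single-valued holomorphic solutions together with one solution of the form $(z-1)^\gamma \cdot (\text{analytic unit})$, so that $M_1$ has eigenvalue $1$ with multiplicity at least $n-1$ and one further eigenvalue $e^{2i\pi\gamma}$; in particular $M_1 = \Id + R$ with $\operatorname{rank} R \le 1$.

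\textbf{Main obstacle.} The delicate point is ruling out logarithmic coupling at $z=1$: since the integer exponents $0,1,\dots,n-2$ all differ by positive integers, the Frobenius method a priori admits logarithms that would destroy the rank-one splitting, and one must also handle the resonant case in which $\gamma$ itself is a non-negative integer. I would resolve this either by direct inspection of the Frobenius recursion for the specific form $L = L_1 + (1-z)L_2$, verifying that the potential obstructions to polynomial solutions vanish, or by invoking the classical construction of $n-1$ independent holomorphic solutions at $z=1$ in terms of ${}_nF_{n-1}$ series (equivalently, Levelt's rigidity theorem for hypergeometric local systems, which forces $M_1 - \Id$ to have rank exactly one).
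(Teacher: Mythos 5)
Your indicial computations are correct and are, in substance, the proof found in the very sources the paper cites: the paper gives no argument of its own for Proposition \ref{eigenvalues}, deferring to Fedorov (Prop.\ 2.1) and Beukers--Heckman (Prop.\ 3.2 and Thm.\ 3.5), and your derivation of the local exponents — $\prod_i(s-\alpha_i)$ at $0$, $\prod_j(s+\beta_j)$ in the chart $w=1/z$, and, via the decomposition $L=L_1+(1-z)L_2$ with $\operatorname{ord}L_1\le n-1$, the exponents $0,1,\dots,n-2,\gamma$ at $z=1$ — reproduces Beukers--Heckman's Prop.\ 3.2 (your formula even degenerates correctly when $\sum\beta_j=\sum\alpha_i$, where $c=0$ and the roots become $0,\dots,n-1$ with $\gamma=n-1$; note also that your $\gamma$ differs from the paper's by the integer $n-1$ and a choice of representatives, which is harmless mod $\Z$). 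One small caveat: the proposition is stated for arbitrary real sequences, so at $0$ and $\infty$ exponents may be congruent mod $\Z$ and the pure Frobenius basis $z^{\alpha_i}(1+O(z))$ need not exist; you should invoke the general Fuchsian fact that the local monodromy eigenvalues are $\E{s_k}$ for the indicial roots $s_k$ regardless of logarithmic terms.

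The genuine issue is exactly where you located it, but one of your two proposed closures is circular and the other is misdescribed. Levelt's theorem cannot \emph{force} $\operatorname{rank}(M_1-\Id)\le 1$: it asserts existence and uniqueness up to conjugacy of pairs with prescribed spectra \emph{within the class of pairs whose quotient is a pseudo-reflection}, so applying its rigidity to the monodromy of equation \eqref{hypergeometric} presupposes the very fact to be proved. Likewise there is no simple explicit ${}_nF_{n-1}$ basis of holomorphic solutions at $z=1$; the connection problem at unit argument is notoriously implicit. The honest closure is your first option — showing the eigenvalue-$1$ part of $M_1$ is semisimple by producing $n-1$ single-valued solutions at $z=1$ — and a clean packaging of it is the Komatsu--Malgrange index theorem: in the coordinate $t=z-1$ the coefficient of $(d/dt)^n$ in $L$ is $(1-z)(1+t)^n=-t(1+t)^n$, vanishing to order exactly one, so $L\colon\C\{t\}\to\C\{t\}$ has index $n-1$ and hence its kernel of germs of holomorphic solutions has dimension at least $n-1$, giving $\operatorname{rank}(M_1-\Id)\le 1$; when the $\alpha_i$ and $\beta_j$ are disjoint mod $\Z$ the local system is irreducible, so $M_1\neq\Id$ and the rank is exactly one. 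With that step supplied (or simply cited, as the paper does), your argument is complete.
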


\begin{proof}
  See Proposition 2.1 in \cite{Fedorov} or alternatively Prop. 3.2 and
  Theorem 3.5 in \cite{BeukersHeckman}
\end{proof}

\noindent This proposition determines the conjugacy class of the
representation associated to the flat bundle
$\pi_1 \left( {\cpminus} \right) \rightarrow GL_n \left(\mathbb R
\right)$ thanks to the rigidity of hypergeometric equations (see
\cite{BeukersHeckman}).
They will be computed explicitly in section \ref{secmonodromy}.\\

\paragraph{\bf{Lyapunov exponents.}}
We now endow the 3 punctured sphere with its hyperbolic metric.  As
this metric implies an ergodic geodesic flow, for any integrable norm
on the flat bundle we associate to it , using Oseledets theorem, a
measurable flag decomposition of the vector bundle and $n$ Lyapunov
exponents.  These exponents correspond to the growth of the norm of a
generic vector in each flag while transporting it along with the flat
connection.\\

\noindent According to \cite{EKMZ} there is a canonical family of
integrable norms on the flat bundle associated to the hypergeometric
equation which will produce the same flag decomposition and Lyapunov
exponents.  This family contains the harmonic norm induced by the VHS
structure
and the norm we will use in our algorithm.\\

\paragraph{\bf{Variation of Hodge structure.}}
Hypergeometric equations on the sphere are well known to be physically
rigid (see \cite{BeukersHeckman} or \cite{Katz}) and this rigidity
together with irreducibility is enough to endow the flat bundle with a
VHS using its associated Higgs bundle structure (see \cite{Fedorov} or
directly Cor 8.2 in \cite{Simpsona}).  Using techniques from
\cite{Katz} and \cite{Sabbah}, Fedorov gives in \cite{Fedorov} an
explicit way to compute the Hodge numbers for the underlying VHS. We
extend this computation and give a combinatorial point of view that
will be more convenient in the following to express
parabolic degrees of the Hodge flag decomposition.\\

\noindent We introduce a canonical way to describe combinatorics of
the intertwining of $\alpha$'s and $\beta$'s on the circle $\R /
\Z$. Starting from any eigenvalue, we browse the circle
counterclockwise (or in the increasing direction for $\R$) and denote
$\alpha_1, \dots, \alpha_n, \beta_1, \dots, \beta_n$ by order of
appearance $\eta_1, \eta_2, \dots, \eta_{2n}$ and define
$\tilde f : \Z \cap [0, 2n] \mapsto \Z$ recursively by the following
properties,
\begin{itemize}
\item $\tilde f(0)=0$
\item $\tilde f(k) = \tilde f(k-1) + \left\{
    \begin{array}{rl}
      1& \text{ if } \eta_k \text{ is an } \alpha \\
      -1& \text{ if } \eta_k \text{ is an } \beta \\
    \end{array}
  \right.  $
\end{itemize}
\vspace{.3cm}

\noindent Let $f$ be defined on the eigenvalues by
$f(\eta_k) = \tilde f(k)$.  It depends on the choice of starting point
up to a shift. For a canonical definition, we shift $f$ such that its
minimal value is $0$. It is equivalent to starting at the point of
minimal value. This defines a unique $f$ which we \textit{intertwining
  diagram} of the equation.

\begin{figure}[h]
  \begin{subfigure}[c]{.45\linewidth}
    \centering
    \includegraphics[height=6cm]{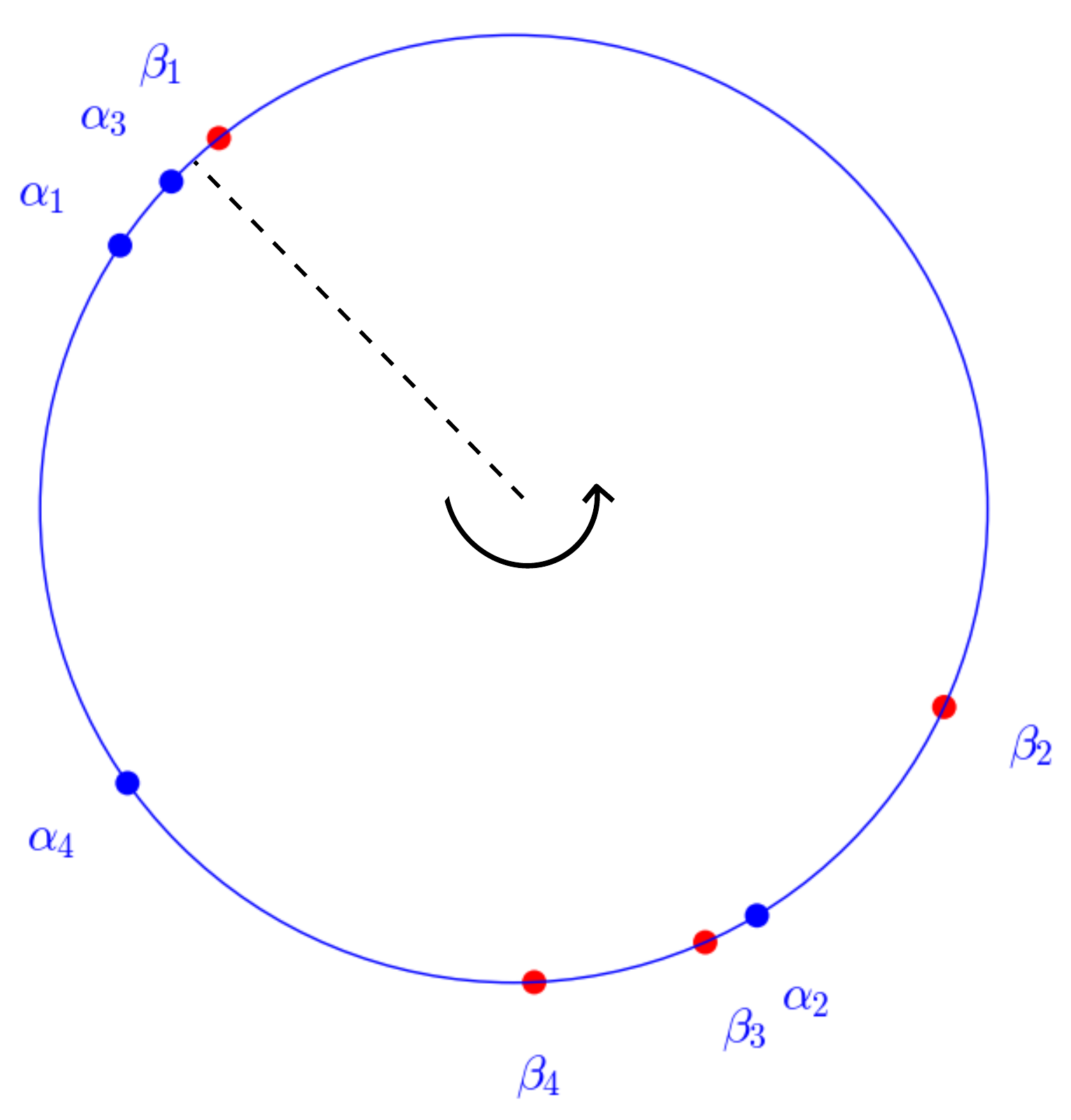}
  \end{subfigure}
  \begin{subfigure}[c]{.5\linewidth}
    \centering
    \includegraphics[height=6cm]{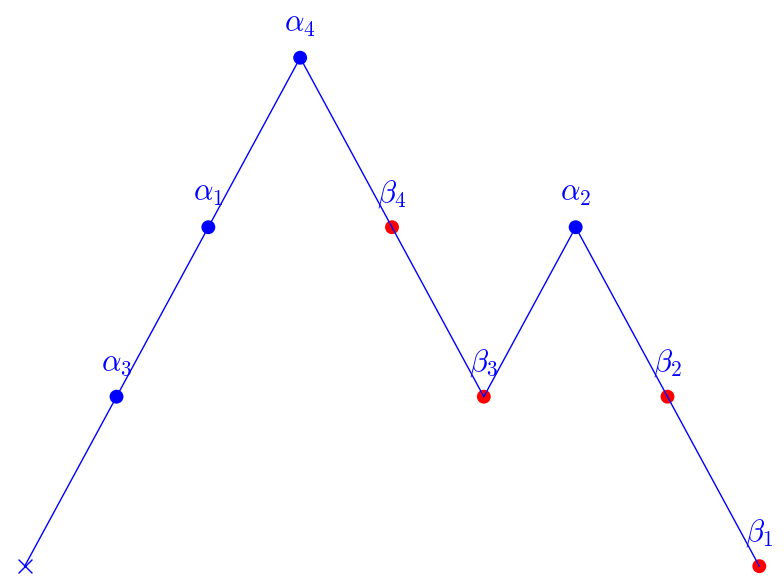}
  \end{subfigure}
  \caption{Example of computation of $f$}
\end{figure}

\vspace*{.5cm}

\noindent For every integer $1 \leq i \leq n$ we define
$$h_i := \# \{ \alpha \ | \ f(\alpha)=i \} \ = \ \# \{ \beta \ | \ f(\beta) = i-1 \}$$
Then we have the following theorem,

\begin{theorem*}[Fedorov]
  The $h_1, h_2, \dots, h_{n}$ are the Hodge numbers of the VHS after
  an appropriate shifting.
\end{theorem*}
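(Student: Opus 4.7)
The plan is to reduce the statement to Fedorov's explicit formula for the Hodge numbers \cite{Fedorov}, which arises from Katz's middle convolution algorithm, and then to show that this formula is precisely encoded by the intertwining diagram.

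First, I would recall Fedorov's result in its original form: the Hodge numbers (up to overall shift) are expressed as cardinalities of subsets of the $\alpha_i$ cut out by a counting procedure on the unit circle $\R / \Z$. One fixes a basepoint, records the cyclic order of the $\alpha$'s and $\beta$'s obtained by going counterclockwise, and at each $\alpha_i$ counts the excess of $\alpha$'s over $\beta$'s encountered before it. Collecting the $\alpha_i$ by the value of this count yields the multiset $\{h^p\}$.

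Second, I would identify Fedorov's count with the function $\tilde f$. By definition, $\tilde f(k)$ is the signed count of $\alpha$'s minus $\beta$'s among $\eta_1,\dots,\eta_k$, so $f(\alpha_i) = \tilde f(k)$ agrees term by term with Fedorov's count at $\alpha_i$. The identity $\#\{\alpha : f(\alpha) = i\} = \#\{\beta : f(\beta) = i-1\}$ is then a general property of closed integer-valued walks: each excursion of $\tilde f$ from level $i-1$ up to level $i$ begins with an up-step at some $\alpha$ (reaching $i$) and ends with a matching down-step at some $\beta$ (returning to $i-1$), and since $\tilde f(0) = \tilde f(2n) = 0$ these up- and down-steps pair off bijectively.

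Third, I would address the choice of starting point. Fedorov's formula depends on a basepoint on $\R / \Z$, but replacing one basepoint by another cyclically shifts the interlacing sequence $\eta_1,\dots,\eta_{2n}$ and therefore translates $\tilde f$ by a constant. This ambiguity corresponds precisely to a Tate twist of the Hodge grading, under which the $h^p$ are defined only up to an overall shift of the index $p$. The normalization $\min f = 0$ picks a canonical representative of this class; it realizes the "appropriate shifting" in the statement.

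The main obstacle is the bookkeeping in step two: Fedorov's counting arises from the step-by-step Hodge filtration produced by middle convolution, which is naturally attached to a linear ordering of the $\alpha_i$ and $\beta_j$ rather than to a symmetric circular walk. One therefore has to check that the intermediate filtrations of the algorithm are recorded by the excursion structure of $\tilde f$. I would do this by induction on $2n$: a single middle-convolution step removes one $\alpha$ and one $\beta$ and modifies the profile by erasing a corresponding up--down pair from $\tilde f$, and the Hodge numbers transform accordingly, so the combinatorial $h_i$ satisfy the same recursion as Fedorov's.
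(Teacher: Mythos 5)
Your proposal is correct and follows essentially the same route as the paper: the paper also derives this statement from Fedorov's middle-convolution lemma combined with the Dettweiler--Sabbah recursion \cite{Fedorov, Sabbah}, with the inductive step being exactly your observation that removing a pair $\alpha_k \prec \beta_j$ erases a corresponding up--down pair from the profile $f$ (its Figure 2), and the normalization $\min f = 0$ absorbing the overall shift. The only bookkeeping you leave implicit that the paper makes explicit is the initialization at $n=2$ and the need for $n \geq 3$ in the inductive step, where two different choices of removed pair must overlap at $\alpha_2$ to rule out a relative shift between the recursively computed gradings.
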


\begin{remark}
  If the $\alpha$'s and $\beta$'s appear in an alternate order then
  $f(\alpha) \equiv 1$ and $f(\beta) \equiv 0$ thus their is just one
  element in the Hodge decomposition and the polarization form is
  positive definite.  In other words the harmonic norm is invariant by
  the flat connection.  This implies that Lyapunov exponents are zero.\\

  \noindent In general, this Hodge structure endows the flat bundle
  with a pseudo-Hermitian form of signature $(p,q)$ where $p$ is the
  sum of the even Hodge numbers and $q$ the sum of the odd ones. This
  gives classically the fact that the Lyapunov spectrum is symmetric
  with respect to $0$ and that at least $|p-q|$ exponents are zero
  (see Appendix A in \cite{Forni}).
\end{remark}

Pushing further methods of \cite{Fedorov} and \cite{Sabbah}, we
compute the parabolic degree of the sub Hodge bundles.  This
computation was done with the help of computer experiments in section
\ref{n=2} which yielded a conjectural formula for these degrees.
Besides from the intertwining diagram, another quantity emerges to
express them; relabel $\alpha$ and $\beta$ by order of appearance
after choosing $\alpha_1$ such that $f(\alpha_1) = 0$, then take the
representatives of $\alpha$ and $\beta$ in $\R$ which are included in
$[\alpha_1, \alpha_1 +1 [$ and define
$\gamma := \sum \beta - \sum \alpha$.  The formula will depend on the
floor value of $\gamma$. As $0 < \gamma < n$ we have $n$ possible
values $0 \leq [\gamma] < n$.

\begin{theorem}
  Let $1 \leq p \leq n$ and $\mathcal E^p$ the $p$-th graded piece of
  the Hodge filtration on $\cpminus$. We denote by $\delta^p$ the
  degree of the Deligne compactification of $\mathcal E^p$ on the
  sphere. Then,

  \begin{itemize}
  \item if $p = [\gamma] + 1$
    $$\deg_{par}(\mathcal E^p) = \delta^p + \{ \gamma \} + \sum_{f(\alpha)=p} \alpha
    + \sum_{f(\beta)=p-1} 1 - \beta $$
  \item otherwise
    $$\deg_{par}(\mathcal E^p) = \delta^p + \sum_{f(\alpha)=p} \alpha
    + \sum_{f(\beta)=p-1} 1 - \beta $$
  \item
    $$-\delta^p(V) = \# \left\{ \beta_i \ | \ f(\beta_i) = p-1 \text{ and } i \leq n-[\gamma] \right\}$$
  \end{itemize}
\end{theorem}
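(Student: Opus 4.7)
The plan is to decompose the parabolic degree of each graded piece $\mathcal{E}^p$ as the sum of the Deligne-extension degree $\delta^p$ together with three local parabolic contributions at the punctures:
$$\deg_{par}(\mathcal{E}^p) = \delta^p + w_0(\mathcal{E}^p) + w_1(\mathcal{E}^p) + w_\infty(\mathcal{E}^p).$$
Each local term is a sum of parabolic weights lying in $[0,1)$ attached to the eigenspaces of the corresponding monodromy, and I would identify them in turn using Fedorov's description of the Hodge filtration together with Proposition~\ref{eigenvalues}.

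For $w_0$ and $w_\infty$ I would use Fedorov's construction directly. It attaches to each $\alpha_i$ with $f(\alpha_i)=p$ an eigenline of $M_0$ with eigenvalue $e^{2i\pi\alpha_i}$ lying in the graded piece $\mathcal{E}^p/\mathcal{E}^{p+1}$, and to each $\beta_j$ with $f(\beta_j)=p-1$ an eigenline of $M_\infty$ with eigenvalue $e^{-2i\pi\beta_j}$. Normalizing $\alpha_i$ into $[0,1)$ yields the weight $\alpha_i$, and the eigenvalue $e^{-2i\pi\beta_j}$ gives weight $1-\beta_j$; summing produces the two partial sums that appear in both cases of the theorem.

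The contribution $w_1$ is where the two cases branch. Because $M_1 = \Id + \text{rank one}$, there is exactly one nontrivial parabolic weight at $1$, and it lives on a single graded piece of the Hodge filtration. Taking determinants in the relation $M_\infty M_1 M_0 = \Id$ and using the normalization of the $\alpha_i,\beta_j$ into $[\alpha_1,\alpha_1+1)$ with $f(\alpha_1)=0$, one obtains $w_1 \equiv \gamma \pmod{\Z}$, so the unique fractional weight at $1$ is $\{\gamma\}$. Locating the graded piece it sits on is the crux: I would track the image of $M_1-\Id$ through the Katz--Sabbah middle-convolution construction of the hypergeometric bundle and verify that it lies in $\mathcal{E}^{[\gamma]+1}$ but not in $\mathcal{E}^{[\gamma]+2}$, which explains why the extra $\{\gamma\}$ is picked up only for $p=[\gamma]+1$.

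Finally, for the third bullet I would compare two normalizations of the $\beta_j$: the Deligne-canonical representatives in $[0,1)$ and the theorem's representatives in $[\alpha_1,\alpha_1+1)$. Passing from the former to the latter shifts by $+1$ exactly the $\beta_j$ whose $[0,1)$-representative lies below $\alpha_1$, i.e.\ those of index $j\leq n-[\gamma]$ in the cyclic relabelling, and each such shift lowers $\delta^p$ by one precisely when $f(\beta_j)=p-1$. This yields the stated count. The main obstacle I expect is the $z=1$ step of the third paragraph: matching the global arithmetic invariant $\gamma$ with the local Hodge-theoretic datum of which graded piece carries the nontrivial weight at $1$ requires unwinding the middle-convolution construction of the VHS in sufficient detail to follow the rank-one image of $M_1-\Id$ through the filtration.
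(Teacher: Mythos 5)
Your overall decomposition and your treatment of the punctures $0$ and $\infty$ agree with the paper: equation (\ref{parabolic degree}) is exactly your splitting $\deg_{par}(\mathcal E^p)=\delta^p+w_0+w_1+w_\infty$, and the local invariants at $0$ and $\infty$ are indeed Fedorov's (the paper re-derives them combinatorially, as items (1) and (2) of Theorem \ref{hodge invariants}). Your $z=1$ step points at the right machinery, and your determinant argument correctly produces the weight $\{\gamma\}$ modulo $\Z$; but note that what the paper actually uses to locate the graded piece is not a direct tracking of the rank-one image of $M_1-\Id$, but the Dettweiler--Sabbah transformation rule for local Hodge data under middle convolution (Theorem \ref{recursive}), applied recursively via Fedorov's Lemma \ref{middle convolution} with an explicit $n=2$ base case, yielding $\nu^p_\gamma=\delta(p,[\gamma]+1)$. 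That part of your proposal is an acknowledged sketch rather than an error.

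The genuine gap is your third bullet. First, the combinatorics are wrong: the $\beta_j$ whose $[0,1)$-representative lies below $\alpha_1$ are the \emph{last} ones in the cyclic relabelling, not those with $j\le n-[\gamma]$, and their number is not $n-[\gamma]$ in general. Concretely, take $n=2$ with $\alpha$'s at $0.1,\,0.2$ and $\beta$'s at $0.4,\,0.6$: then $\gamma=0.7$, $[\gamma]=0$, no $\beta$ is shifted at all, so your mechanism outputs $\delta^p=0$ for all $p$, whereas the theorem gives $-\delta^1=-\delta^2=1$ since $n-[\gamma]=2$ counts both $\beta$'s. Second, and more structurally, comparing the $[0,1)$ and $[\alpha_1,\alpha_1+1)$ normalizations can only ever compute the \emph{difference} between the degrees of two extensions; it contains no input that pins down the integer $\delta^p$ itself, so the argument is circular. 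The paper supplies precisely this missing input in two steps: a continuity lemma, proved via the Simpson correspondence and rigidity of hypergeometric local systems, showing that $\delta^p$ is constant on each domain where the intertwining diagram and $[\gamma]$ are fixed; and Corollary \ref{degenerate}, stating that the parabolic degree tends to $0$ when the monodromy degenerates to the identity. Sending the first $[\gamma]$ $\beta$'s to $1$ and all remaining eigenvalues to $0$ within such a domain, the weight sums converge to $\#\{\beta_i \mid f(\beta_i)=p-1,\ i\le n-[\gamma]\}$ (each surviving $\beta$ contributes $1-\beta\to 1$, while $\alpha\to 0$ and $\{\gamma\}\to 0$ contribute nothing), and the vanishing of the limiting parabolic degree forces $-\delta^p$ to equal that count. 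To repair your proof you would need to replace the renormalization step by such a degeneration argument, or by some other absolute computation of a Deligne degree.
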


\subsection*{Acknowledgement}
I am very grateful to Maxim Kontsevich for sharing this problem and
taking time to discuss it. I thank dearly Jeremy Daniel for his
curiosity to the subject and his answer to my myriad of questions as
well as Bertrand Deroin; Anton Zorich for his flawless support and
attention, Martin Möller and Roman Fedorov for taking time to explain
their understanding of the parabolic degrees and Hodge invariants at
MPIM in Bonn. I am also very thankful to Carlos Simpson for his kind
answers and encouragements.

\section{Degree of Hodge subbundles}

\subsection{Variation of Hodge Structure}
\label{secvhs}
We start recalling the definition of complex variations of Hodge structures (VHS).\\

A ($\C$-)VHS on a curve $C$ consists of a complex local system
$\mathbb V_\C$ with a connection $\nabla$ and a decomposition of the
Deligne extension
$\mathcal V = \bigoplus_{p \in \mathbb Z} \mathcal E^p$ into
$C^\infty$-subbundles, satisfying:

\begin{itemize}
\item $\mathcal F^p := \bigoplus_{i \geq p} \mathcal E^i$ (resp.
  $\overline{\mathcal F^p} := \bigoplus_{i \leq p} \mathcal E^i$) are
  holomorphic (resp. antiholomorphic) subbundles for every $p \in \Z$.
\item The connection shifts the grading by at most one, i.e.
  $$\nabla(\mathcal F^p) \subset \mathcal F^{p-1} \otimes \Omega_{\mathcal C}^1
  \text{ and } \nabla(\overline{\mathcal F^p}) \subset
  \overline{\mathcal F^{p-1}} \otimes \Omega_{\mathcal C}^1 $$
\end{itemize}

Up to a shift, we can assume that there is a $n$ such that
$\mathcal E^i = 0$ for $i < 0$ and $i>n$.  We call $n$ the weight of
the VHS.  We can introduce for convenience the notation
$\mathcal E^{p, n-p} := \mathcal F^p / \mathcal F^{p-1}$.  Then we
have a $C^\infty$ isomorphism between the bundles:
$$ \bigoplus_{i=0}^n \mathcal E^{i, n-i} \cong \mathbb V$$

\subsection{Decomposition of an extended holomorphic bundle}

Let $\mathcal C$ be a complex curve, we assume that its boundary set
$\Delta := \overline{\mathcal C} \backslash \mathcal C$ is an union of
points. Consider $\holb$ an holomorphic bundle on
$\overline{\mathcal C}$. We introduce structures which will appear on
such holomorphic bundle when they are obtained by canonical extension
when we compactify $\mathcal C$.  The first one will take the form of
filtrations on each fibers above points of $\Delta$.

\begin{definition}[Filtration]
  A $[0,1)$-filtration on a complex vector bundle $V$ is a collection
  of real weights $0 \leq w_1 < w_2 < \dots < w_n < w_{n+1} = 1$ for
  some $n \geq 1$ together with a filtration of sub-vector spaces
$$G^\bullet : V = V^{\geq w_1} \supsetneq V^{\geq w_2} \supsetneq \dots \supsetneq V^{\geq w_{n+1}} = V^{\geq 1} = 0$$
The filtration satisfies $V^{\geq \nu} \subset V^{\geq \omega}$
whenever $\nu \geq \omega$ and the previous weights satisfy
$V^{\geq w_i + \epsilon} \subsetneq V^{\geq w_i}$ for any
$\epsilon > 0$.\\

We denote the graded vector bundles by
$\gr_{w_i} := \bigslant{V^{\geq w_i}}{V^{\geq w_i + \epsilon}}$ for
$\epsilon$ small.  The degree of such a filtration is by definition
$$\deg(G^\bullet) := \sum_{i=1}^{n} w_i \dim(\gr_{w_i})$$
\end{definition}

This leads to the next definition,

\begin{definition}[Parabolic structure]
  
  A \textit{parabolic structure} on $\holb$ with respect to $\Delta$
  is a couple $(\holb, G^\bullet)$ where $G^\bullet$ defines a
  $[0,1)$-filtration $G^\bullet \holb_s$ on every fiber $\holb_s$ for
  any $s \in \Delta$.\\
  A \textit{parabolic bundle} is a holomorphic bundle endowed with a
  parabolic structure.\\
  The \textit{parabolic degree} of $(\holb, G^\bullet)$ is defined to
  be
$$\deg_{par}(\holb, G^\bullet) := \deg(\holb) + \sum_{s\in\Delta} \deg(G^\bullet \holb_s)$$
\end{definition}

\subsection{Deligne extension}
In the following we consider $\mathbb V$ a flat bundle on $\cpminus$
associated to a monodromy representation with norm one eigenvalues.
We denote by $\mathcal V_{\mathcal C}$ the associated holomorphic
vector bundle.

We recall the construction of Deligne's extension of
$\mathcal V_{\mathcal C}$ which defines a holomorphic bundle on
$\overline{\mathcal C}$ with a logarithmic flat connection. We
describe it on a small pointed disk centered at $s \in \Delta$ with
coordinate $q \in D^*$. Let $\rho$ be a ray going outward of the
singularity, then we can speak of flat sections along the ray
$L(\rho)$ which has the same rank $r$ as $\mathcal V$.  As all the
$L(\rho)$ are isomorphic, we choose to denote it by $V^0$.  There is a
monodromy transformation $T: V^0 \rightarrow V^0$ to itself obtained
after continuing the solutions. This corresponds to the monodromy
matrix in the given representation. For every $\alpha \in [0,1)$ we
define
$$W_\alpha = \{ v \in V^0: (T-\zeta_\alpha)^rv= 0\}\text{ where } \zeta_\alpha = e^{2i\pi\alpha}$$

These vector spaces are non trivial for finitely many
$\alpha_i \in [0,1)$.  We define
$$T_\alpha = \zeta_\alpha^{-1} T_{|W_\alpha} \text{ and } N_\alpha = \log T_\alpha$$

Let $q: \mathbb H \to D^*, q(z) = e^{2i\pi z}$ be the universal cover
of $D^*$. Choose a basis $v_1, \dots, v_r$ of $V^0$ adapted to the
generalized eigenspace decomposition
$V^0 = \bigoplus_\alpha W_\alpha$. We consider $v_i(z)$ as the pull
back of $v_i$ on $\mathbb H$. If $v_i \in W_\alpha$, then we define
$$\widetilde{v}_i(z) = \exp(2i\pi\alpha z + z N_\alpha) v_i$$

These sections are equivariant under $z \mapsto z+1$ hence they give
global sections of $\mathcal V_{\mathcal C}(D^*)$. The Deligne
extension of $\mathcal V_{\mathcal C}$ is the vector bundle whose
space of section over $D$ is the $\mathcal O_{\mathcal D}$-module
spanned by $\tilde v_1, \dots, \tilde v_r$.
This construction naturally gives a filtration on $V^0$.\\

In general, we can define various extensions
$V^a \subset V^{-\infty} \subset j_* V$ where $j$ is the inclusion
$j: \mathcal C \rightarrow \overline{\mathcal C}$, $V^\infty$ is the
Deligne's meromorphic extension and $V^a$ (resp. $V^{>a}$) for
$a \in \R$ is the free $\mathcal O_{\overline{\mathcal C}}$-module on
which the residue of $\nabla$ has eigenvalues $\alpha$ in $[a, a+1)$
(resp.  $(a, a+1]$).  The bundle $V^\bullet$ is a filtered vector
bundle in the definition
of \cite{EKMZ}.\\

\noindent If we have a VHS $F^\bullet$ on $\holb$ over $\mathcal C$,
it induces a filtration of every $V^a$ simply by taking
$$F^p V^a := j_* F^p V \cap V^a$$
this is a well defined vector bundle thanks to Nilpotent orbit theorem
(see \cite{Sabbah}).\\

\noindent We define over some singularity $s\in\Delta$, for
$a \in (-1, 0]$ and $\lambda = \exp(- 2i\pi a)$,
$$\psi_\lambda (V^{-\infty}) = \gr_V^a = \bigslant{V^a}{V^{>a}}$$

\begin{definition}[Local Hodge data]
  For $a \in [0,1)$, $\lambda = \exp(2i\pi a)$, $ p \in \Z$ and
  $l \in \mathbb N$, we set for any $s \in \Delta$
  
  \begin{itemize}
  \item $\nu_{\alpha}^p = \dim \gr_F^p \psi_\lambda(V_s)$ also written
    $h^p \psi_\lambda(V_s)$
  \item $h^p(V) = \sum_\alpha \nu_\alpha^p(V_s)$
  \end{itemize}
\end{definition}

Simpson's theory (\cite{Simpsona}) claims that for any local system
with all eigenvalues of the form $exp(-2\pi i \alpha)$ at the
singularities endowed with a trivial filtration we associate a
filtered $\Dx$-module with residues and jumps both equal to $\alpha$.
Thus the sub $\Dx$-module corresponding to the residue $\alpha$ has
only one jump of full dimension at $\alpha$, and
\begin{equation}
  \label{parabolic degree}
  \deg_{par} (\gr_F^p V) = \delta^p(V) + \sum_{s \in \Delta,\alpha} \alpha \nu_\alpha^p(V_s)
\end{equation}
where we choose $\alpha \in [0,1)$. \\

\subsection{Acceptable metrics and metric extensions}
The above Deligne extension has a geometric interpretation when we
endow $\mathcal C$ with a acceptable metric $K$.  If $V$ is a
holomorphic bundle on $\mathcal C$, we define the sheaf
$\Xi(V)_\alpha$ on $\mathcal C \cup \{s\}$ as follows.  The germs of
sections of $\Xi(E)_\alpha$ at $s$ are the sections $s(q)$ in $j_* V$
in the neighborhood of $s$ which satisfy a growth condition; for all
$\epsilon \geq 0$ there exists $C_\epsilon$ such that
$$|s(q)|_K \leq C_\epsilon |q|^{\alpha-\epsilon}.$$

\noindent In general this extension is a coherent sheaf on which we do
not have much information, but Simpson shows in \cite{Simpsona} that
under some growth condition on the curvature of the metric, the metric
induces the above Deligne extensions. When a curvature satisfies this
condition it is called \textit{acceptable}.

\begin{lemma}[Lemma 2.4 \cite{EKMZ}]
  The local system $\mathbb V$ with non-expanding cusp monodromies has
  a metric which is acceptable for its Deligne extension $\mathcal V$
\end{lemma}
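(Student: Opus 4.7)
\noindent The plan is to construct a global Hermitian metric $K$ on $\mathbb V$ by gluing explicit model metrics near each cusp, built from the Deligne frame, with an arbitrary smooth metric on a relatively compact piece of $\mathcal C$, and then to verify the acceptability of the Chern curvature and the identification $\Xi(V)_\alpha \cong V^\alpha$.

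For each puncture $s \in \Delta$, I work on a small disk $D$ with coordinate $q$ and its universal cover $q = e^{2i\pi z}$. I take the generalized eigenspace decomposition $V^0 = \bigoplus_\alpha W_\alpha$ of the local monodromy $T$ and fix a basis $v_1,\ldots,v_r$ of $V^0$ adapted to it. The non-expanding cusp hypothesis means every eigenvalue of $T$ lies on the unit circle, so each $T_\alpha = \zeta_\alpha^{-1} T_{|W_\alpha}$ is unipotent and $N_\alpha = \log T_\alpha$ is nilpotent of index at most $\dim W_\alpha$. The Deligne frame $\tilde v_i(z) = \exp(2i\pi\alpha z + zN_\alpha) v_i$ is $z \mapsto z+1$-equivariant, hence descends to a holomorphic frame of $\mathcal V_{\mathcal C}$ over $D^*$. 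I define the local model metric $K_s$ by declaring $\{\tilde v_1,\dots,\tilde v_r\}$ orthonormal. Globally, I cover $\overline{\mathcal C}$ by these disks together with an open set $U$ relatively compact in $\mathcal C$, equip $\mathbb V|_U$ with an arbitrary smooth Hermitian metric $K_U$, and glue via a convex combination $K = \sum \phi_i K_i$ using a smooth partition of unity subordinate to this cover.

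Acceptability is then almost automatic. In the holomorphic Deligne frame the matrix of $K_s$ is $H = \Id$, so the Chern connection matrix is $\theta = H^{-1}\partial H = 0$ and the Chern curvature of $K_s$ vanishes identically on each punctured disk. The curvature is smooth and bounded on $U$, and also on the overlap region, which lies away from the cusps. Hence the global curvature satisfies Simpson's acceptability condition, being dominated by a constant multiple of the Poincar\'e form on $\cpminus$.

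The main remaining step is to identify $\Xi(V)_\alpha$ with $V^\alpha$. Writing a local section near $s$ as $\sigma = \sum c_i(q) \tilde v_i$ with meromorphic $c_i$ yields $|\sigma|_{K_s}^2 = \sum |c_i(q)|^2$ by construction, so the growth condition $|\sigma|_K \leq C_\epsilon |q|^{\alpha - \epsilon}$ for every $\epsilon > 0$ translates directly into growth bounds on the $c_i$. The main obstacle is the careful book-keeping: one must account for the relative factor $q^\alpha$ between the Deligne bases adapted to different generalized eigenvalues of $T$ when matching $K$-norm growth with generators of the free $\mathcal O_{\overline{\mathcal C}}$-modules $V^a$. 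This follows from the asymptotic $\tilde v_i(q) = q^\alpha \exp(\tfrac{\log q}{2i\pi} N_\alpha) v_i$, and together with Simpson's theorem in \cite{Simpsona} yields $\Xi(V)_\alpha = V^\alpha$, completing the construction.
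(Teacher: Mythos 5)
Your global strategy --- explicit model metrics near the cusps built from the Deligne frame, glued by a partition of unity with an arbitrary smooth metric on a compact core, followed by a curvature check and the identification of the metric extension with the Deligne extension --- is exactly the strategy of the paper's proof (reproduced from \cite{EKMZ}). But your choice of local model is wrong, and the error is fatal to the final identification. You declare the Deligne frame $\tilde v_1, \dots, \tilde v_r$ \emph{orthonormal}. Since this frame is a holomorphic frame of the extended bundle $\mathcal V$ across the puncture, your $K_s$ is a smooth metric on $\mathcal V$ over the full disk, and its growth filtration jumps only at integer values of $a$: writing $\sigma = \sum_i c_i(q)\,\tilde v_i$ gives $|\sigma|^2_{K_s} = \sum_i |c_i(q)|^2$, so for $0 < a < 1$ the condition $|\sigma|_{K_s} \leq C_\epsilon |q|^{a-\epsilon}$ forces each meromorphic $c_i$ into $q\mathcal O$, hence $\Xi(V)_a = q V^0$, whereas $V^a$ is spanned by the $\tilde v_i$ with residue $\alpha_i \geq a$ together with the $q\tilde v_i$ for $\alpha_i < a$. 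So $\Xi(V)_a \neq V^a$ as soon as some residue lies in $(0,1)$: your metric is acceptable for the \emph{trivial} parabolic structure, not for the Deligne extension, which is precisely what the lemma asserts. Your closing paragraph is internally inconsistent on this very point: having normalized $|\tilde v_i|_{K_s} \equiv 1$, there is no ``relative factor $q^\alpha$'' left in the $K$-norm; the asymptotic $\tilde v_i = q^\alpha \exp\bigl(\tfrac{\log q}{2i\pi} N_\alpha\bigr) v_i$ compares $\tilde v_i$ with the flat multivalued frame $v_i$ and carries no metric information once you have decreed the $\tilde v_i$ to be unit vectors.

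The fix, and the paper's actual construction, is the opposite normalization: the $\tilde v_i$ in the $\alpha$-eigenspace must be pairwise orthogonal with norm of order $|q|^\alpha$. The paper achieves this by choosing $M$ with $e^{2i\pi M} = T$ and taking the hermitian matrix $\exp(\log|q|\, \overline{M^t} M)$, so that $\tilde v_i$ has norm $|q|^\alpha |\tilde v_i|$; then the jumps of $a \mapsto \Xi(V)_a$ occur exactly at the residues, recovering the Deligne filtration. With this metric the curvature near the cusp is no longer identically zero, and verifying that it is still acceptable --- bounded with respect to the Poincar\'e metric, the $\log|q|$ factors entering only polynomially through the nilpotent parts $N_\alpha$ --- is exactly where the non-expanding hypothesis (unit-modulus eigenvalues, so the exponents $\alpha$ are real and the non-semisimple part of the monodromy is unipotent) is actually used; in your version that hypothesis does no essential work, which is itself a warning sign. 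A final sanity check that your model cannot be right: with it, the curvature vanishes identically near every cusp for every such local system, so the curvature integral computing the parabolic degree would always reduce to the ordinary degree $\delta^p$, contradicting equation~(\ref{parabolic degree}) and emptying Corollary~\ref{degenerate} of content, since the fractional contributions $\sum_{s,\alpha} \alpha\, \nu^p_\alpha(V_s)$ would never appear.
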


\begin{proof}
  For completeness, we reproduce the construction of \cite{EKMZ}.  The
  idea is to construct locally a nice metric and to patch the local
  constructions together with partition of unity.  The only delicate
  choice is for the metric around singularities.  We want the basis
  elements $\tilde v_i$ of the $\alpha$-eigenspace of the Deligne
  extension to be given the norm of order $|q|^\alpha$ in the local
  coordinate $q$ around the cusp and to be pairwise orthogonal.  Let
  $M$ be such that $e^{2i\pi M} = T$, where $T$ is the monodromy
  transformation.  Then the hermitian matrix
  $exp(log|q| \overline{M^t} M)$ defines a metric such that the
  element $\tilde v_i$ has norm $|q|^\alpha |\tilde v_i|$.
\end{proof}

\begin{corollary}
  \label{degenerate}
  When the monodromy representation goes to zero, the parabolic degree
  goes to zero.
\end{corollary}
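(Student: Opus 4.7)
The statement to prove is that $\deg_{par} \to 0$ as the monodromy representation becomes trivial. My interpretation of ``monodromy representation goes to zero'' is that the monodromy matrices $T$ at each singularity approach the identity, which in the notation of the preceding lemma means $M \to 0$, where $T = e^{2i\pi M}$. The plan is to show that both summands in the formula
$$\deg_{par} (\gr_F^p V) = \delta^p(V) + \sum_{s \in \Delta,\alpha} \alpha\, \nu_\alpha^p(V_s)$$
from equation (\ref{parabolic degree}) vanish in this limit.

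First I would treat the easier sum. With the normalization $\alpha \in [0,1)$, each eigenvalue of $T$ is of the form $e^{2i\pi\alpha}$, and the continuity of the convention requires that as $T \to \operatorname{Id}$ all of these $\alpha$'s tend to $0$. Since the $\nu_\alpha^p(V_s)$ are bounded by $\dim V$, the contribution $\sum_{s,\alpha} \alpha \nu_\alpha^p(V_s)$ converges to $0$.

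For the second term, I would use the geometric description of Deligne's extension via the acceptable metric constructed in the previous lemma. The local metric around each cusp is $\exp(\log|q|\, \overline{M^t} M)$ with respect to a basis of flat sections; as $M \to 0$, this metric tends to the constant identity metric and extends smoothly across $s$, so the holomorphic bundle $V^a$ degenerates to the trivial extension of the now holomorphically trivial flat bundle across the punctures. In the limit the total space is $\cpone$ with a bundle extending trivially, hence $\delta^p(V) \to 0$. Combined with the previous paragraph this gives $\deg_{par}(\gr_F^p V) \to 0$.

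The main obstacle is the second step: the degrees $\delta^p(V)$ are integers (or at least integer-valued on combinatorial data of the monodromy), so ``convergence to $0$'' really has to mean that for monodromies sufficiently close to the identity one already has $\delta^p(V) = 0$. I would justify this by showing that the jump data $\nu_\alpha^p$ is concentrated at the single value $\alpha = 0$ once the eigenvalues lie close enough to $1$, so that the filtration of the Deligne extension trivializes at each cusp and the extension agrees with the canonical one of the trivial bundle on $\cpone$. Combined with the already-established vanishing of the sum of $\alpha \nu_\alpha^p$, this yields the corollary.
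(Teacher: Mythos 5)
Your plan --- splitting the parabolic degree via equation (\ref{parabolic degree}) into $\delta^p(V)$ plus the weight sum and showing each term vanishes separately --- fails at exactly the point you flagged as the main obstacle. The claim that ``as $T \to \operatorname{Id}$ all of these $\alpha$'s tend to $0$'' is false: the normalization $\alpha \in [0,1)$ is discontinuous at eigenvalue $1$, and an eigenvalue $e^{2i\pi\alpha}$ may approach $1$ with $\alpha \to 1^-$. This is not a marginal case but precisely the regime in which the paper uses the corollary: in the paragraph following it, one fixes the intertwining diagram and $[\gamma]$ and sends some of the eigenvalue parameters to $0$, so the corresponding weights $1-\beta$ at infinity tend to $1$, not $0$. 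In that limit the weight sum $\sum_{s,\alpha}\alpha\,\nu^p_\alpha$ tends to a nonzero integer, and since $\delta^p(V)$ is constant on the chamber by the continuity lemma, it equals \emph{minus} that integer --- this is exactly how the paper extracts the formula $-\delta^p(V) = \#\left\{ \beta_i \mid f(\beta_i) = p-1,\ i \leq n-[\gamma] \right\}$, which is visibly nonzero in general. So your second step, that the jump data concentrates at $\alpha=0$ for monodromy close to the identity and hence $\delta^p(V)=0$, is wrong: nothing forces the weights toward $0$ rather than $1^-$, the filtration of the Deligne extension at the cusp does not trivialize, and your argument would prove the corollary only for degenerations in which all residues tend to $0$ --- a special case in which the corollary would be useless for the paper's computation of $\delta^p$.

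The paper's proof avoids the decomposition entirely and works with the quantity that \emph{is} continuous: by Simpson's theory the parabolic degree of any subbundle is the integral over the curve of the curvature of the acceptable metric constructed in the preceding lemma. As $T \to \operatorname{Id}$ with $M \to 0$, the local model $\exp(\log|q|\,\overline{M^t}M)$ tends to the standard Hermitian metric, its curvature near the cusps goes to zero, and so does the integral, i.e.\ the parabolic degree. The curvature integral computes the total $\delta^p + \sum_{s,\alpha}\alpha\,\nu^p_\alpha$ at once; this sum is continuous in the monodromy even though neither summand is, because when a weight crosses $1$ and wraps back to $0$ the extension undergoes an elementary modification and $\delta^p$ jumps by the compensating integer. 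If you wish to salvage your route, what you must prove is this cancellation under weight wrap-around, not the separate vanishing of the two terms.
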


\begin{proof}
  In the proof above, it is clear that when $T \to \mathrm{Id}$,
  $M \to 0$ and thus the metric goes to the standard hermitian metric
  locally. Thus its curvature goes to zero around singularities and
  its integral on any subbundle, which by definition is its parabolic
  degree, goes to zero.
\end{proof}

\subsection{Local Hodge invariants}
Our purpose in this subsection is to show the following relation on
local Hodge invariants:
\begin{theorem}
  \label{hodge invariants}
  The local Hodge invariants for equation \ref{hypergeometric} are :
  \begin{enumerate}
  \item at $z = 0$,
    $$ \nu_{\alpha_m}^p =
    \left\{
      \begin{array}{ll}
        1 \text{ if } p = f(\alpha_m)\\
        0 \text{ otherwise }\\
      \end{array}
    \right.
    $$
  \item at $z = \infty$,
    $$ \nu_{-\beta_m}^p =
    \left\{
      \begin{array}{ll}
        1 \text{ if } p - 1 = f(\beta_m)\\
        0 \text{ otherwise }\\
      \end{array}
    \right.
    $$
  \item at $z = 1$,
    $$ \nu_{\gamma}^p =
    \left\{
      \begin{array}{ll}
        1 \text{ if } p = [ \gamma ] + 1\\
        0 \text{ otherwise }\\
      \end{array}
    \right.
    $$
  \end{enumerate}
\end{theorem}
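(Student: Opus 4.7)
The plan is to extend Fedorov's method of computing Hodge numbers to the finer local Hodge data $\nu_\alpha^p$, by tracking this data through Katz's middle convolution construction of hypergeometric local systems. Fedorov's cited theorem, which identifies the total dimensions $h^p = \sum_\alpha \nu_\alpha^p$ with the zigzag numbers $h_p$, already follows by this method; what is needed here is the refinement to each individual eigenspace.

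First I would realize the rank-$n$ hypergeometric local system as an iterated middle convolution of rank-one Kummer sheaves on $\cpminus$, adjoining the parameters $\alpha_1,\dots,\alpha_n$ (at $z=0$) and $\beta_1,\dots,\beta_n$ (at $z=\infty$) one at a time. At each step the rank grows by one, and at $z=1$ a pseudo-reflection structure is preserved throughout, so $M_1$ always has a single non-trivial eigenvalue.  For a Kummer sheaf (the base case $n=1$) the local Hodge data are trivial: a single level, a single one-dimensional eigenspace at $0$ and at $\infty$, and no singularity at $z=1$. This verifies the theorem for $n=1$.

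The inductive step uses the Dettweiler–Sabbah formulas for how nearby cycles transform under middle convolution by a Kummer sheaf $K_\alpha$ (cf. \cite{Fedorov}, \cite{Sabbah}). At $z=0$, convolution by $K_\alpha$ inserts a new one-dimensional generalized eigenspace corresponding to the parameter $\alpha$ (and shifts existing eigenvalues by $\alpha$). One then checks that the Hodge level assigned to the new eigenvector by the Dettweiler–Sabbah recipe equals precisely $f(\alpha)$, where $f$ is the intertwining function computed \emph{after} inserting $\alpha$ into the circle $\R/\Z$. This involves comparing how many existing $\alpha$'s and $\beta$'s sit in the arc preceding $\alpha$ and showing that the net count matches the change in Hodge level prescribed by the nearby-cycle formula. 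The symmetric computation at $\infty$ (with $\beta$ in place of $\alpha$ and the shift $p\mapsto p-1$ accounted for by exchanging the roles of $\mathcal F^p$ and $\overline{\mathcal F^p}$) yields part (2). These two cases are straightforward combinatorics once the convolution recipe is in place.

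The core difficulty is part (3). At each step a new non-trivial eigenvalue $e^{2\pi i \gamma_k}$ appears at $z=1$, and one must track both its fractional part (which determines $\lambda$) and, crucially, the integer $[\gamma_k]$ which determines the Hodge level. I would proceed by showing: (a) convolution by $K_\alpha$ updates $\gamma$ additively by $-\alpha$ (and dually for $\beta$), so $\gamma$ is always the difference $\sum\beta-\sum\alpha$ on the normalized representatives; (b) the Hodge level of the non-trivial eigenvector at $z=1$ changes by $+1$ exactly when the newly inserted parameter crosses the basepoint used to lift $\gamma$ from $\R/\Z$ to $\R$; (c) these two updates are compatible, so that the level remains $[\gamma]+1$ throughout. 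The main obstacle is precisely this bookkeeping: the integer part $[\gamma]$ depends on the choice of basepoint $\alpha_1$ with $f(\alpha_1)=0$, and one has to verify that the Dettweiler–Sabbah level jump at $z=1$ under each convolution step is governed exactly by whether the new parameter crosses this basepoint, so that after the full recursion the final Hodge level of the unique non-unipotent eigenvector at $z=1$ is $[\gamma]+1$. Once this is established, the unipotent part of $M_1$ contributes only at $\alpha=0$, which drops out of the parabolic degree formula in equation \eqref{parabolic degree}, completing the proof.
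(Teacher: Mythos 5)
Your overall strategy --- induction on the rank via Katz's middle convolution, transporting the local Hodge data with the Dettweiler--Sabbah nearby-cycle formulas --- is exactly the strategy of the paper (Lemma \ref{middle convolution} together with Theorem \ref{recursive}), and your treatment of parts (1) and (2), as well as the $\gamma$-bookkeeping in part (3) (the additive update of $\gamma$ under each convolution step and the wrap-around criterion for when the Hodge level jumps by one), matches the paper's computation. But there is a genuine gap at the heart of your inductive step. The Dettweiler--Sabbah formula, in the form that drives this recursion, expresses $\nu^p_{\alpha}\left( MC_{\alpha_0}(M) \right)$ only in terms of the data of $M$ at the \emph{shifted, pre-existing} eigenvalues; it does not hand you the Hodge level of the eigenvector that is newly created by the convolution step, since that slot involves the trivial eigenvalue, where the passage from naive to middle convolution (killing invariants) breaks the quoted formula. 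Your sentence ``one then checks that the Hodge level assigned to the new eigenvector by the Dettweiler--Sabbah recipe equals precisely $f(\alpha)$'' therefore assumes exactly the point that needs an argument. The paper circumvents this by running the recursion with \emph{two different} choices of adjoined pair (e.g. $(\alpha_n,\beta_n)$ with $\alpha_n < \beta_n$, and then $(\alpha_1,\beta_1)$ with $\alpha_1 < \beta_1$): each run misses the invariants at its own pair, and the two runs together cover all eigenvalues. This also forces $n \geq 3$, for a second reason your proposal does not address at all: each step determines the VHS grading only up to an overall shift, and one needs an eigenvalue (in the paper, $\alpha_2$) computed in both runs to pin the shift down.

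Your base case is off as well. The rank-one hypergeometric system is not a Kummer sheaf with singularities only at $0$ and $\infty$: it has monodromy $\E{(\beta_1 - \alpha_1)}$ at $z = 1$, and the paper's initialization uses precisely $\nu^1_{\beta_1 - \alpha_1} = 1$ there. More importantly, because a single run from rank $1$ to rank $2$ cannot see the new pair $(\alpha_2, \beta_2)$ and there is no second choice of pair at $n = 2$, the induction cannot be launched from $n=1$ by convolution alone; the paper has to import Fedorov's global Hodge numbers $h^1, h^2$ for $n=2$ and use the relation $\sum_{\alpha} \nu^p_{\alpha} = h^p$ to determine $\nu^p_{\alpha_2}$ and $\nu^p_{-\beta_2}$. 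Some such external input is unavoidable at the bottom of your induction too; as written, your inductive step has nothing to stand on at the newly adjoined eigenvalues.
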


\begin{remark}
  Computations of (1) and (2) are done in \cite{Fedorov}. We give a
  similar proof with an alternative combinatoric point of view.\\
\end{remark}

\subsection{Computation of local Hodge invariants}
In the following, we denote by $M$ the local system defined by the
hypergeometric equation \ref{hypergeometric} in the introduction.  The
point at infinity plays a particular role in middle convolution, thus
we apply a biholomorphism to the sphere which will send the three
singularity points $0, 1, \infty$ to $ 0, 1, 2$. Hereafter, $M$ will
have singularities at $0, 1, 2$.

Similarly $M_{k,j}$ corresponds to the hypergeometric equation where
we remove terms in $\alpha_k$ and $\beta_j$,
$$\prod_{m \neq k} (D - \alpha_m) - z \prod_{n \neq j} (D - \beta_n) = 0$$

Let $L_{k,j}$ be a flat line bundle above
$\cpone - \{ 0, 2, \infty \}$ with monodromy $\Es{\alpha_k}$ at $0$,
$\Es{-\beta_j}$ at $2$ and $\Es{\beta_j - \alpha_k}$ at $\infty$.
Similarly $L'_{k,j}$ is defined to have monodromy $\Es{-\beta_j}$ at
$0$, $\Es{\alpha_k}$ at $2$ and $\Es{\beta_j - \alpha_k}$ at $\infty$.\\


The two key stones in the proof are Lemma $3.1$ in \cite{Fedorov} and
Theorem $3.1.2$ in \cite{Sabbah} :

\begin{lemma}[Fedorov]
  \label{middle convolution}
  For any $k,j \in \{ 1, \dots, n \}$ we have,
  $$ M \simeq MC_{\beta_j-\alpha_k} \left( M_{k,j} \otimes L'_{k,j} \right) \otimes L_{k,j}  $$
\end{lemma}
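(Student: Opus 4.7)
The plan is to verify both sides of the claimed isomorphism have the same local monodromy data at each of the three singularities $\{0, 1, 2\}$, and then invoke the rigidity of irreducible hypergeometric local systems (Beukers--Heckman) to deduce the isomorphism as local systems. Middle convolution $MC_\lambda$ is, by Katz's theorem, a functor on the category of irreducible local systems on the punctured sphere, and its effect on local monodromy eigenvalues is given by an explicit combinatorial recipe (see Katz, or the matrix reformulation of Dettweiler--Reiter). That recipe is all we need: it says that, away from a distinguished ``pseudo-reflection'' slot, each eigenvalue is multiplied by $\lambda$, and the global rank is adjusted in a prescribed way.

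First I would compute the local eigenvalues of $M_{k,j} \otimes L'_{k,j}$ at $0, 1, 2$. Since $M_{k,j}$ is a rank $n-1$ hypergeometric local system, Proposition \ref{eigenvalues} gives its eigenvalues $\{\Es{\alpha_i}\}_{i\neq k}$ at $0$, $\{\Em{\beta_{j'}}\}_{j'\neq j}$ at the ``infinity'' singularity (here moved to $2$), and a pseudo-reflection at $1$. Tensoring with $L'_{k,j}$ uniformly multiplies these eigenvalues by the corresponding monodromy of $L'_{k,j}$, producing eigenvalues of the form $\Es{\alpha_i-\beta_j}$ at $0$, $\Es{\alpha_k-\beta_{j'}}$ at $2$, and a pseudo-reflection with distinguished eigenvalue $\Es{\beta_j-\alpha_k}$ at $1$.

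Second, I would apply $MC_{\beta_j-\alpha_k}$ and use Katz's formula singularity by singularity. At the pseudo-reflection point $1$, the distinguished eigenvalue $\Es{\beta_j-\alpha_k}$ is precisely what middle convolution ``absorbs,'' so the output becomes a pseudo-reflection with the expected rank-one structure and a new distinguished eigenvalue equal to $1$. At $0$ and $2$, each eigenvalue is multiplied by $\Es{\beta_j-\alpha_k}$, except for a single slot where the specific eigenvalue $1$ is inserted to bump the rank from $n-1$ to $n$. Finally, tensoring with $L_{k,j}$ multiplies the eigenvalues at $0$ by $\Es{\alpha_k}$ and at $2$ by $\Em{\beta_j}$, at which point the three local eigenvalue multisets exactly match those of $M$ predicted by Proposition \ref{eigenvalues}.

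The main obstacle is the careful bookkeeping through the middle convolution step: one must track not only the eigenvalue collections but also the special ``extra'' eigenvalue introduced by $MC_\lambda$ and the cancellation of the distinguished eigenvalue at the pseudo-reflection. A subsidiary but essential issue is checking irreducibility of $M_{k,j}\otimes L'_{k,j}$ so that Katz's middle-convolution formalism and Beukers--Heckman rigidity are both available; this reduces to a non-resonance hypothesis on the $\alpha_i - \beta_{j'}$, which holds generically and is standard in the hypergeometric setup. Once both sides share the same local data and are irreducible, rigidity forces the isomorphism.
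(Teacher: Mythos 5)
Your overall strategy --- compute the local monodromy data of the right-hand side at every singularity and conclude by rigidity --- is exactly the intended one: the paper gives no argument at all, simply citing Lemma 3.1 of \cite{Fedorov}, and Fedorov's proof runs along precisely the lines you describe. However, your bookkeeping contains a genuine error at the point $1$. The line bundle $L'_{k,j}$ lives on $\cpone - \{0,2,\infty\}$, so it is \emph{unramified} at $1$, and tensoring by it cannot alter the pseudo-reflection there: its distinguished eigenvalue is that of $M_{k,j}$ itself, namely $\E{\tilde\gamma}$ with $\tilde\gamma = \gamma - \beta_j + \alpha_k$ (the same $\tilde\gamma$ the paper later uses in its recursion at the point $1$), and not $\E{\beta_j-\alpha_k}$. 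Consequently there is no ``absorption'': Katz's local rule multiplies the nontrivial eigenvalue of the pseudo-reflection by $\lambda = \E{\beta_j-\alpha_k}$, producing $\E{\gamma}$, which is exactly what is required, since $\det M_1 = \E{\gamma}$ by the relation $M_\infty M_1 M_0 = \Id$. If your version were right, the output at $1$ would be unipotent (distinguished eigenvalue $1$), the local data would fail to match that of $M$ whenever $\gamma \notin \Z$, and the rigidity argument you invoke could not close.

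Second, you never examine the point $\infty$, yet that is where the mechanism of the lemma lives. Both $L_{k,j}$ and $L'_{k,j}$ are ramified at $\infty$ with monodromy $\E{\beta_j-\alpha_k}$, so $M_{k,j}\otimes L'_{k,j}$ is a local system on the \emph{four}-punctured sphere whose monodromy at $\infty$ is the scalar $\lambda\Id$. This is precisely what makes Katz's rank formula return $n$ rather than something of order $2n$: the defect term at infinity vanishes because every eigenvalue there equals the special value $\lambda$. Your attribution of the rank bump to ``inserting the eigenvalue $1$'' at $0$ and $2$ describes the local output correctly but not its cause. Moreover, $MC_\lambda$ leaves the scalar $\lambda^{-1}\Id$ at $\infty$, and one must check that the final tensoring by $L_{k,j}$ cancels it, so that the right-hand side is honestly a local system on $\cpone - \{0,1,2\}$ with trivial monodromy at $\infty$; without this check the rigidity comparison would be between local systems on different punctured spheres. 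Repairing these two points (and keeping your correct observation that irreducibility, i.e.\ $\alpha_i \not\equiv \beta_{j'} \bmod \Z$, is needed both for Katz's middle-convolution formalism and for the rigidity step) turns your sketch into the proof Fedorov actually gives.
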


We modify a little bit the formulation of \cite{Sabbah}, taking
$\alpha = 1 - \alpha$. Thus the condition becomes
$1-\alpha \in (0, 1-\alpha_0] \iff \alpha \in [\alpha_0, 1)$.  Which
implies the following formulation.

\begin{theorem}[Dettweiler-Sabbah]
  \label{recursive}
  Let $\alpha_0 \in (0,1)$, for every singular point in $ \Delta $ and
  any $\alpha \in [0,1)$ we have,

  $$
  \def\arraystretch{1.5} \nu^p_{\alpha} \left( MC_{\alpha_0}(M)
  \right) = \left\{
    \begin{array}{l l}
      \nu^{p-1}_{\alpha-\alpha_0} \left( M \right) & \text{if } \ \alpha \in \left[0, \alpha_0 \right)\\
      \nu^{p}_{\alpha-\alpha_0} \left( M \right) & \text{if } \ \alpha \in \left[\alpha_0, 1 \right)
    \end{array}
  \right.
  $$

  \noindent and,
  $$
  \delta^p \left( MC_{\alpha_0}(M) \right) = \delta^p(M) + h^p(M)-
  \sum_{\underset{\{-\alpha\} \in [0, \alpha_0)}{s \in \Delta}}
  \nu_{s,\alpha}^{p-1} (M)
  $$
\end{theorem}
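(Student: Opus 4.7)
The plan is to realize the middle convolution via the Fourier-Laplace transform and then reduce to an elementary local computation on nearby cycles at each singular point. Concretely, one writes
\[
MC_{\alpha_0}(M) = \mathrm{FL}^{-1}\bigl( \mathrm{FL}(M) \otimes K_{\alpha_0} \bigr),
\]
where $\mathrm{FL}$ is the Fourier-Laplace transform on $\mathbb{A}^1$ viewed as a functor on holonomic $\mathcal{D}$-modules, and $K_{\alpha_0}$ is the Kummer $\mathcal{D}$-module with residue $\alpha_0$ at $0$ and $-\alpha_0$ at $\infty$. This realization, due to Katz and upgraded to the mixed Hodge module setting by Dettweiler-Sabbah, decouples the operation into three steps whose effects on local Hodge data can be tracked independently.

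For the formula for $\nu^p_\alpha$ I would first analyze tensoring with $K_{\alpha_0}$. At any singular point this operation sends $\psi_\lambda N$ to $\psi_{\lambda\cdot e^{2\pi i\alpha_0}}N$, just by its effect on the V-filtration: the residue eigenvalue $\alpha$ is shifted to $\alpha+\alpha_0\bmod 1$. The only nontrivial point is the Hodge grading: when the shift crosses the integer, passing from the $V^{\geq a+\alpha_0}$-piece to its representative in $[0,1)$ incorporates a Tate twist $p\mapsto p-1$, which accounts exactly for the case distinction $\alpha\in[0,\alpha_0)$ versus $\alpha\in[\alpha_0,1)$. For the Fourier-Laplace step I would invoke Sabbah's stationary phase formula, which asserts that local Hodge invariants at a finite regular singular point of $M$ are preserved by $\mathrm{FL}$ (the rest of the local structure is repackaged into the irregular behaviour at $\infty$), so that the composition $\mathrm{FL}^{-1}\circ\mathrm{FL}$ acts as the identity on nearby-cycle data at finite singularities. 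Combining these two inputs gives the first displayed formula.

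For the degree formula I would use identity (\ref{parabolic degree}) to write the parabolic degree as $\delta^p + \sum_{s,\alpha}\alpha\,\nu^p_\alpha(V_s)$. Since middle convolution by a unitary Kummer preserves parabolic degree up to a global rank contribution $h^p(M)$, the correction in $\delta^p$ is the difference between the $\alpha$-weighted sums before and after the shift. Writing $\alpha' = \alpha-\alpha_0\bmod 1$, the shift raises the weighted sum by $\alpha_0$ for those local eigenvalues that remain in $[\alpha_0,1)$ and by $\alpha_0-1$ for those pushed across the integer into $[0,\alpha_0)$. Summing these contributions over all singular points and matching them against $h^p(M)$ yields exactly the correction $h^p(M)-\sum_{\{-\alpha\}\in[0,\alpha_0)}\nu^{p-1}_{s,\alpha}(M)$ appearing in the statement, after using the grading shift $p\mapsto p-1$ for the transgressing block already identified in the first part.

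The main obstacle will be the Fourier-Laplace step: one must carefully track the Hodge filtration through the stationary phase at both the finite singularities and at $\infty$, where $\mathrm{FL}(M)$ typically acquires new irregular singularities that become tame again only after tensoring with $K_{\alpha_0}$ and inverting. A secondary difficulty is keeping the sign, indexing and normalization conventions for $\psi_\lambda$, $\mathrm{gr}_F^p$, and the parabolic filtration $V^\bullet$ mutually consistent across the three steps; this bookkeeping is precisely the place where deferring to \cite{Sabbah} for the technical core, and restricting one's own argument to the combinatorial shift analysis, is most natural.
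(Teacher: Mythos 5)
First, a remark on the comparison itself: the paper offers no proof of this statement. It is imported verbatim from Dettweiler--Sabbah (Theorem 3.1.2 of \cite{Sabbah}), and the only original content in the paper is the reindexing $\alpha \mapsto 1-\alpha$ that converts the condition $1-\alpha \in (0, 1-\alpha_0]$ into $\alpha \in [\alpha_0, 1)$. Your sketch does correctly identify the broad strategy behind the cited result --- Katz's realization of $MC_{\alpha_0}$ through the Fourier--Laplace transform and a Kummer twist, with the Hodge-theoretic control coming from Sabbah's earlier work on Fourier--Laplace transforms of polarized variations of Hodge structure --- so as an account of where the theorem comes from it is pointed in the right direction.

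As a proof, however, it has a genuine gap, and it sits exactly where you put the weight. You claim that stationary phase shows the local Hodge invariants at finite regular singular points are \emph{preserved} by $\mathrm{FL}$, so that $\mathrm{FL}^{-1}\circ\mathrm{FL}$ is the identity on nearby-cycle data and the Kummer twist then acts pointwise at each singularity as the shift $\alpha \mapsto \alpha + \alpha_0 \bmod 1$ with a Tate twist upon crossing the integer. That cannot be right as stated: $\mathrm{FL}(M)$ has its only finite singularity at $\tau=0$, and the local data of $M$ at a finite singular point $s$ is repackaged, via stationary phase, into the exponential component $e^{s\tau}$ of the irregular singularity at $\tau=\infty$ --- which is precisely where $K_{\alpha_0}$ acts nontrivially. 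So the entire content of the theorem lives in the bookkeeping you defer to the ``main obstacle'' paragraph; there is no step at which the twist acts locally at the $s_i$. A second, concrete symptom: a pure shift-plus-twist of the $\nu^p_\alpha$ preserves $\sum_{p,\alpha}\nu^p_\alpha$ at each singular point, i.e.\ the generic rank, whereas middle convolution changes the rank (in this paper's recursion it raises it from $n-1$ to $n$); in Dettweiler--Sabbah this is absorbed by the special treatment of the eigenvalues $\lambda=1$ and $\lambda=e^{2\pi i\alpha_0}$ (vanishing cycles, $\mathrm{can}$/$\mathrm{var}$, the ``middle'' in middle extension), which your shift analysis never touches --- the clean two-case formula is only valid under the genericity in force here. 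Finally, your derivation of the $\delta^p$ formula assumes that ``middle convolution by a unitary Kummer preserves parabolic degree up to a global contribution $h^p(M)$'', which is essentially equivalent to the identity being proven, so that part of the argument is circular as written.
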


\subsubsection{Recursive argument}

We apply a recursive argument on the dimension of the hypergeometric
equation. Let us assume that $n \geq 3$ and that Theorem \ref{hodge
  invariants} is true for $n-1$. \\

For convenience in the demonstration, we change the indices of
$\alpha$ and $\beta$ such that $\alpha_i$ (resp. $\beta_i$) is the
$i$-th $\alpha$ (resp. $\beta$) we come upon while browsing
the circle to construct the function $f$.\\

We apply Lemma \ref{middle convolution} with $\alpha_k$ and $\beta_j$
such that $\alpha_k \leq \beta_j$.  Let us describe what happens to
the combinatorial function $f$ after we remove these two eigenvalues.
We denote by $f'$ the function we obtain.

Removing $\alpha_k$ will make the function decrease by one for the
following eigenvalues until we meet $\beta_j$, thus for any
$\tau \neq \alpha_k, \beta_j$,

$$
f(\tau) = \left\{
  \begin{array}{l l}
    f'(\tau)     & \text{if } \tau \prec \alpha_k \prec \beta_j \\
    f'(\tau) + 1 & \text{if } \alpha_k \prec \tau \prec \beta_j
  \end{array}
\right.
$$

\begin{figure}[h]
  \begin{subfigure}[c]{.45\linewidth}
    \centering
    \includegraphics[width=.9\linewidth]{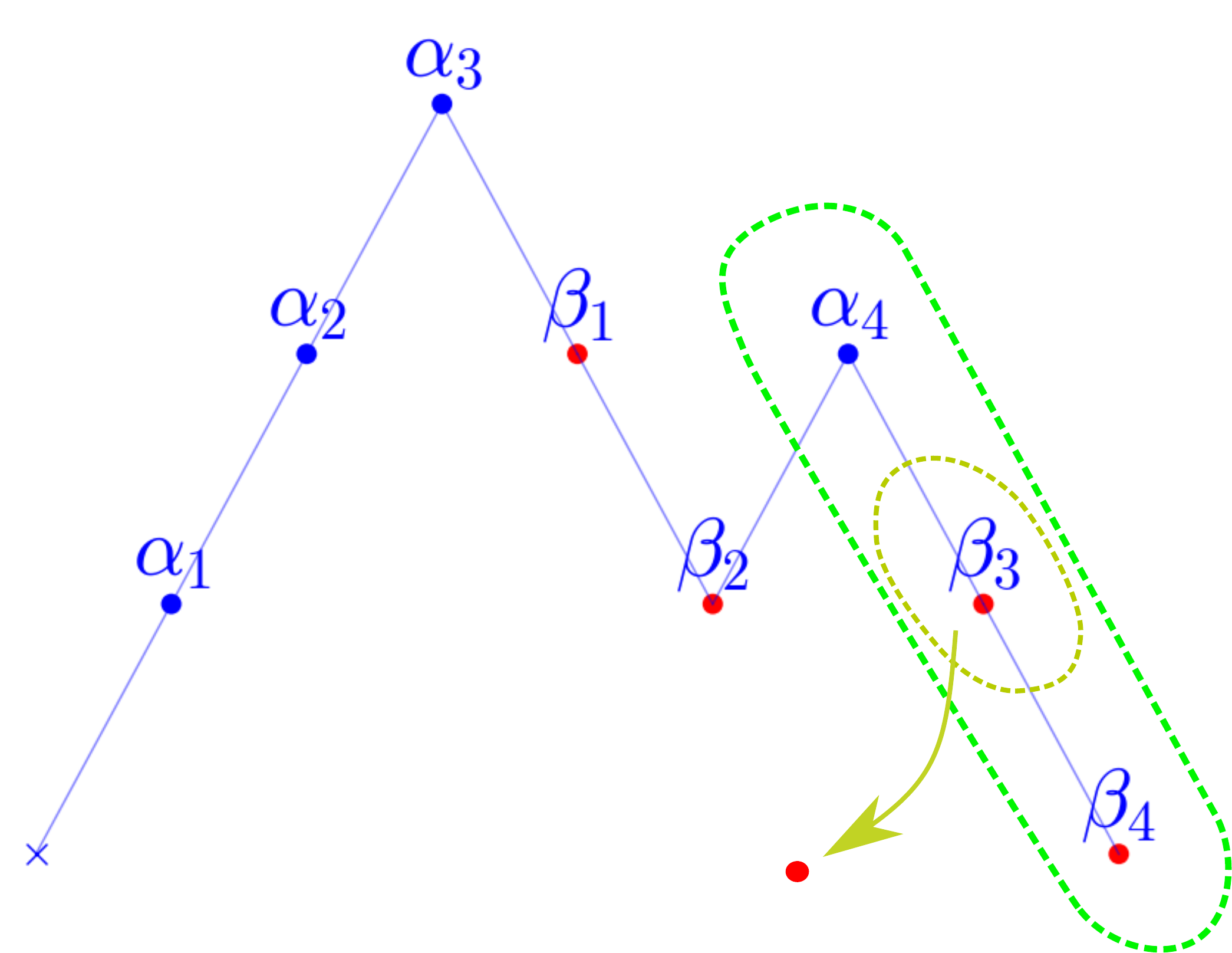}
    \subcaption{Graph of $f$}
  \end{subfigure}
  \begin{subfigure}[c]{.45\linewidth}
    \centering
    \includegraphics[width=.9\linewidth]{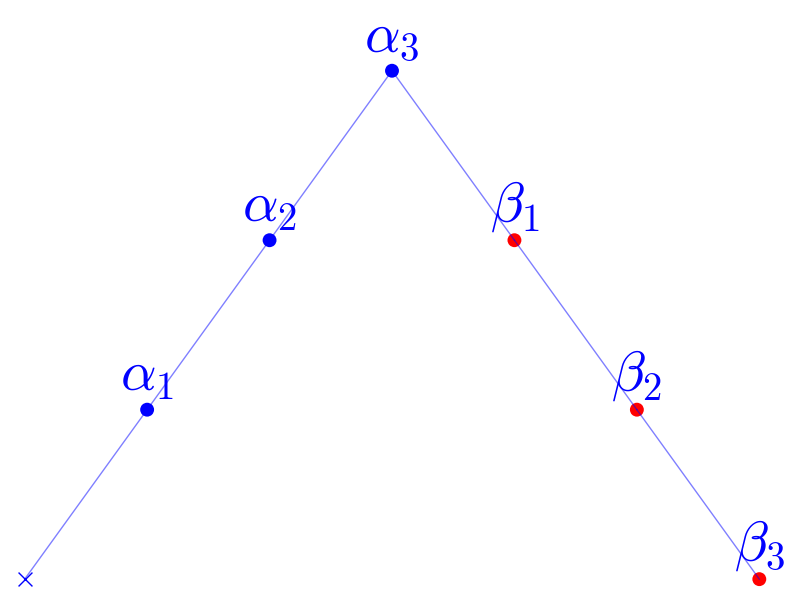}
    \subcaption{Graph of $f'$}
  \end{subfigure}
  \caption{Geometric representation of middle convolution action on
    $f$}
  \label{recprof}
\end{figure}

We apply Theorem \ref{recursive} with $\alpha_0 = \beta_j - \alpha_k$.
It yields that we have for all $m \neq k$, at singularity zero,
$$
\def\arraystretch{2} \nu^p_{1+\alpha_m-\alpha_k} \left( M \otimes
  L_{k,j}^{-1} \right) = \left\{
  \begin{array}{l l}
    \nu^{p-1}_{\alpha_m-\beta_j} \left( M_{k,j} \otimes L'_{k,j} \right) & \text{if }  \{ \alpha_m - \alpha_k \} <  \beta_j - \alpha_k\\
    \nu^{p}_{\alpha_m-\beta_j} \left( M_{k,j} \otimes L'_{k,j} \right) & \text{otherwise} 
  \end{array}
\right.
$$

\noindent which can be written in a simpler form
$$
\def\arraystretch{2} \nu^p_{\alpha_m-\alpha_k} \left( M \otimes
  L_{k,j}^{-1} \right) = \left\{
  \begin{array}{l l}
    \nu^{p-1}_{\alpha_m-\beta_j} \left( M_{k,j} \otimes L'_{k,j} \right) & \text{if } \alpha_k \prec \alpha_m \prec \beta_j\\
    \nu^{p}_{\alpha_k-\beta_j} \left( M_{k,j} \otimes L'_{k,j} \right) & \text{if }  \alpha_m \prec \alpha_k \prec \beta_j
  \end{array}
\right.
$$

\noindent In terms of $M$ and $M_{k,j}$,
$$
\def\arraystretch{2} \nu^p_{\alpha_m} \left( M \right) = \left\{
  \begin{array}{l l}
    \nu^{p-1}_{\alpha_m} \left( M_{k,j} \right) & \text{if } \alpha_k \prec \alpha_m \prec \beta_j\\
    \nu^{p}_{\alpha_m} \left( M_{k,j} \right) & \text{if }  \alpha_m \prec \alpha_k \prec \beta_j
  \end{array}
\right.
$$

\noindent For any integer $i,j$ we denote by $\delta( i, j)$ the
function which is $1$ when $i=j$ and is zero otherwise.

$$
\def\arraystretch{1.5} \nu^p_{\alpha_m} \left( M \right) = \left\{
  \begin{array}{l l}
    \delta \left(p-1, f'(\alpha_m) \right) & \text{if } \alpha_k \prec \alpha_m \prec \beta_j\\
    \delta \left(p, f'(\alpha_m) \right) & \text{if }  \alpha_m \prec \alpha_k \prec \beta_j
  \end{array}
\right.  \ = \delta \left( p, f(\alpha_m) \right)
$$

\noindent Similarly for all $m \neq j$, at singularity 2,
$$
\def\arraystretch{2} \nu^p_{-\beta_m} \left( M \right) = \left\{
  \begin{array}{l l}
    \nu^{p-1}_{-\beta_m} \left( M_{k,j} \right) & \text{if } \alpha_k \prec \beta_j \prec \beta_m  \\
    \nu^{p}_{-\beta_m} \left( M_{k,j} \right) & \text{if } \alpha_k \prec \beta_m \prec \beta_j 
  \end{array}
\right.
$$

$$
\def\arraystretch{1.5} \nu^p_{\beta_m} \left( M \right) = \left\{
  \begin{array}{l l}
    \delta \left( p-1, f'\left( \beta_m \right) \right) & \text{if } \alpha_k \prec \beta_j \prec \beta_m\\
    \delta \left( p, f'\left( \beta_m \right) \right) & \text{if } \alpha_k \prec \beta_m \prec \beta_j
  \end{array}
\right.  = \delta \left( p,f \left( \beta_k \right) \right)
$$

\noindent And at 1, we set
$\tilde{\gamma} := \gamma-\beta_j+\alpha_k = \sum_{m \neq j} \beta_m -
\sum_{m \neq k} \alpha_m$,
$$
\def\arraystretch{2} \nu^p_{\gamma} \left( M \right) = \left\{
  \begin{array}{l l}
    \nu^{p-1}_{\tilde{\gamma}} \left( M_{k,j} \right) & \text{if } \{\gamma\} < \beta_j - \alpha_k\\
    \nu^{p}_{\tilde{\gamma}} \left( M_{k,j} \right) & \text{otherwise}
  \end{array}
\right.
$$

$$
\def\arraystretch{2} \nu^p_{\gamma} \left( M \right) = \left\{
  \begin{array}{l l}
    \delta \left( p-1, [ \tilde \gamma] + 1 \right) & \text{if } \{\gamma\} = \{\tilde{\gamma}\} + \beta_j - \alpha_k - 1\\
    \delta \left( p, [\tilde \gamma] + 1 \right)  & \text{if }  \{\gamma\} = \{\tilde{\gamma}\} + \beta_j - \alpha_k
  \end{array}
\right.  \ = \delta \left(p, [\gamma] + 1 \right)
$$

Now if we choose to pick $\beta_n > \alpha_n$ for the computation, we
now hodge invariants for all values excepts for $\alpha_n$ and
$\beta_n$.  We will use the computation with for example
$\alpha_1 < \beta_1$.  From this one we can deduce the invariants at
$\beta_n$ and $\alpha_n$.  Yet, we should keep in mind that the
previous computations are always modulo shifting of the VHS. That is
why we need to have dimension at least $3$, since in this case
$\alpha_2$ will appear in both computations and will show there is no
shift in our formulas.

\subsubsection{Initialization for n = 2}
We use the computations performed in the previous part for $\alpha_2$
and $\beta_2$. To do so, first remark that the unique (complex
polarized) VHS on $M_{2,2}$ is defined by
$h^p \left( M \right) = \delta(p,1)$ and the only non-zero local Hodge
invariants are
\[
  \begin{array}{l l l}
    \nu_{\alpha_1}^1 \left( M_{2,2} \right) &= 1 & \text{at singularity } 0\\
    \nu_{-\beta_1}^1 \left( M_{2,2} \right) &= 1 & \text{at singularity } \infty\\
    \nu_{\beta_1-\alpha_1}^1 \left( M_{2,2} \right) &= 1 & \text{at singularity } 1\\
  \end{array}
\]
\noindent Which corresponds to the definition of
$\delta(p,f'(\alpha_1))$ for the first two,
and to $\delta(p, [\gamma]+1)$ for the last one.\\

\noindent Using the previous subsection, we deduce
\[
  \begin{array}{l l l}
    \nu_{\alpha_1}^p \left( M \right) &= \delta(p,f(\alpha_1)) & \text{at singularity } 0\\
    \nu_{-\beta_1}^p \left( M \right) &= \delta(p,f(\alpha_1)) & \text{at singularity } \infty\\
    \nu_{\gamma}^p \left( M \right) &= \delta(p,[\gamma] + 1
                                      ) & \text{at singularity } 1\\
  \end{array}
\]

\noindent According to \cite{Fedorov}, the Hodge numbers on $M$ are
$$
h^1 = \left\{
  \begin{array}{l l}
    1 & \text{if } \alpha_1 \prec \alpha_2 \prec \beta_1 \prec \beta_2 \\
    2 & \text{if } \alpha_1 \prec \beta_1 \prec \alpha_2 \prec \beta_2 
  \end{array}
\right., \hspace{1cm} h^2 = \left\{
  \begin{array}{l l}
    1 & \text{if } \alpha_1 \prec \alpha_2 \prec \beta_1 \prec \beta_2\\
    0 & \text{if } \alpha_1 \prec \beta_1 \prec \alpha_2 \prec \beta_2
  \end{array}
\right.
$$

\noindent Using the fact that $\sum_{\alpha} \nu^p_{\alpha} = h^p$, we
can deduce the other Hodge invariants.
$$
\begin{array}{l l}
  \nu_{\alpha_2}^2 = 1 & \text{if } \alpha_1 \prec \alpha_2 \prec \beta_1 \prec \beta_2\\
  \nu_{\alpha_2}^1 = 1 & \text{if } \alpha_1 \prec \beta_1 \prec \alpha_2 \prec \beta_2
\end{array}
$$

\noindent We conclude that
$\nu_{\alpha_2}^p \left( M \right) = \delta(p,f(\alpha_2))$ and
similarly $\nu_{-\beta_2}^p \left( M \right) = \delta(p,f(\alpha_2))$.

\subsection{Continuity of the parabolic degree}

To compute $\delta^p(V)$ in equation \ref{parabolic degree}, we show
in the following Lemma a continuity property which implies that it is
constant on a given domain.

\begin{lemma}
  Let $\alpha_1, \dots, \alpha_n, \beta_1, \dots, \beta_n$ be all
  disjoint, not integers and such that $\gamma$ also is not an
  integer.  Fix the intertwining diagram of $\alpha$ and $\beta$ and
  the value of $[\gamma]$, then for any integer $p$, $\delta^p(V)$ is
  constant.
\end{lemma}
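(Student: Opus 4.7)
The plan is to combine integrality of $\delta^p(V)$ with a continuity argument based on equation (\ref{parabolic degree}).

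\textbf{Step 1: Isolate $\delta^p(V)$.} Rewrite equation (\ref{parabolic degree}) as
$$\delta^p(V) = \deg_{par}(\gr_F^p V) - \sum_{s \in \Delta} \sum_{\alpha \in [0,1)} \alpha\, \nu_\alpha^p(V_s),$$
so it suffices to show the right hand side is continuous in the parameters $(\alpha_i,\beta_j)$ on the locus where the intertwining diagram and $[\gamma]$ are fixed.

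\textbf{Step 2: Continuity of the correction term.} By Theorem \ref{hodge invariants}, the multiplicities $\nu_\alpha^p$ only depend on the intertwining data $f$ (and on $[\gamma]$ at the point $1$); specifically $\nu_{\alpha_m}^p = \delta(p,f(\alpha_m))$, $\nu_{-\beta_m}^p = \delta(p-1, f(\beta_m))$, and $\nu_\gamma^p = \delta(p,[\gamma]+1)$. Hence these multiplicities are locally constant on the chosen domain, while the weights appearing in the sum are $\{\alpha_m\}$, $\{-\beta_m\}$, and $\{\gamma\}$, each of which is a continuous function of the parameters once we have fixed the integer part.

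\textbf{Step 3: Continuity of the parabolic degree.} This is the main technical point. The parabolic degree is the integral over $\overline{\mathcal C}$ of the Chern form of the acceptable metric constructed in the proof of Lemma 2.4 of \cite{EKMZ}. Locally around each singularity the model metric has the explicit form $\exp(\log|q|\,\overline{M^t} M)$ where $M$ is a logarithm of the local monodromy. Since the monodromy matrices depend continuously (indeed, holomorphically) on the parameters of the hypergeometric equation, so does the matrix $M$, and therefore so do the local model metrics. A partition of unity argument patches these together to a global metric depending continuously on the parameters, and the curvature integral (on $\overline{\mathcal C}$ as well as on any Hodge sub-bundle whose defining filtration varies continuously via the nilpotent orbit theorem) inherits this continuity.

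\textbf{Step 4: Conclude via integrality.} The right hand side of Step 1 is therefore continuous on the (connected) locus where the intertwining diagram and $[\gamma]$ are prescribed: any two such parameter choices can be joined by a path without crossing a wall where two eigenvalues collide on the circle or where $\gamma$ crosses an integer. But $\delta^p(V)$ is, by construction, the degree of a holomorphic vector bundle on $\mathbb{P}^1$, and is therefore an integer. A continuous integer-valued function on a connected domain is constant, which is exactly the claim.

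The delicate part is Step 3: one must check that the model metric near each puncture is not only pointwise continuous in $M$ but that its Chern form integrates continuously, and that the Hodge sub-bundles vary continuously across the chosen locus. Both follow from the nilpotent orbit theorem of \cite{Sabbah} together with the fact that along such a locus no eigenvalue of the monodromy collides with another and no jump in the parabolic filtration changes position.
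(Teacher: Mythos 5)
Your overall architecture---$\delta^p(V)$ is an integer, so it suffices to show that $\deg_{par}(\gr_F^p V)$ minus the correction term $\sum_{s,\alpha} \alpha\,\nu_\alpha^p(V_s)$ varies continuously on the connected domain---is sound in principle, and Steps 1, 2 and 4 are fine granting Theorem \ref{hodge invariants} (which the paper proves independently of this lemma, so there is no circularity there). But Step 3 contains a genuine gap, and it is exactly the hard point. The metric $\exp(\log|q|\,\overline{M^t}M)$ from Lemma 2.4 of \cite{EKMZ} is an ad hoc acceptable metric adapted to the Deligne extension of the \emph{full} bundle $\mathbb V$; it is not the harmonic (Hodge) metric and carries no information about the Hodge filtration. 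The parabolic degree of $\gr_F^p V$ is computed from the metric induced on the Hodge sub-bundle, and the sub-bundle $F^p$ itself is determined by solving a global elliptic problem (the harmonic metric of Simpson's correspondence), not by any local model at the cusps. Continuity of the local model in the monodromy matrices therefore does not yield continuity of $\deg_{par}(\gr_F^p V)$ in the parameters; for that you would need continuity of the harmonic metric, hence of the Hodge filtration, as the exponents $\alpha_i,\beta_j$ vary---a nontrivial analytic statement roughly as hard as the lemma itself. Your appeal to the nilpotent orbit theorem does not supply this: that theorem controls the degeneration of a \emph{fixed} VHS at the punctures, not the variation of the VHS in a family of local systems.

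The paper's proof sidesteps all of this analysis and proves constancy exactly rather than via continuity plus integrality. It takes the Higgs bundle $(E,\theta)$ attached to $L$ by Simpson's theory and changes \emph{only} the parabolic weights from $(\alpha,\beta)$ to $(\alpha',\beta')$, keeping the holomorphic structure and the Higgs field fixed. The correspondence then returns a local system whose local data (eigenvalues, and weight filtrations of the unipotent parts, since the residues of $\theta$ are unchanged) match those of $L'$, and rigidity of hypergeometric local systems forces this local system to \emph{be} $L'$. The Hodge sub-bundles of $L'$ are therefore literally the same holomorphic bundles as those of $L$, so each $\delta^p$ is unchanged, with no limiting argument needed; rigidity is precisely the paper's substitute for the continuity of the harmonic metric that your Step 3 would require. (A continuity-at-the-boundary statement, Corollary \ref{degenerate}, is used by the paper only afterwards, to pin down the constant value of $\delta^p$ at the degenerate limit---not to prove constancy.)
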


\begin{proof}
  Let $L$ and $L'$ be the local system corresponding to equation
  \ref{hypergeometric} for eigenvalues $\alpha,\beta$ (resp.
  $\alpha', \beta'$) satisfying the above hypothesis. We endow them
  with a trivial filtration. According to Simpson's theory, $L$
  corresponds to some Higgs bundle $(E, \theta)$ together with a
  parabolic structure at singularities. As $L$ has eigenvalues of norm
  one, $\theta$ has no residue, moreover its weight filtration is
  locally the same as the one for the unipotent part of monodromy
  matrices of $L$.

  Consider now $(E', \theta')$ a Higgs bundle with the same
  holomorphic structure and Higgs form as $(E, \theta)$ but a slightly
  changed parabolic structure for which we keep the initial filtration
  but modify the parabolic weights $\alpha, \beta$ to
  $\alpha', \beta'$. It is clear that we keep the same residues
  $\mathrm{res}_s(\theta) = \mathrm{res}_s(\theta')$ at every
  singularity $s$. We also keep locally the same weight filtration for
  the unipotent part of the monodromy on any eigenspace. This implies
  that monodromy matrices of the local system associated to
  $(E', \theta')$ and those of $L'$ are locally isomorphic, and by
  rigidity of the hypergeometric local systems they are globally
  isomorphic. The same argument applies for Hodge subbundles.  As the
  considered domain is connected this shows that the parabolic degree
  of the Hodge subbundles is constant.
\end{proof}

Together with Corollary \ref{degenerate} this is enough to compute
$\delta^p(V)$.  We fix an intertwining diagram and a floor value for
$\gamma$ and make the first $[\gamma]$ $\beta$ go to $1$ and the rest
of eigenvalues to $0$ while staying in the given domain.  At the
limit, the parabolic degree is zero and we can deduce $\delta^p(V)$.

\section{Algorithm}
In this section, we describe the algorithm used to compute the
Lyapunov exponents.  We start simulating a generic hyperbolic geodesic
and following how it winds around the surface, namely the evolution of
the homology class of the closed path.  Finally we compute the
corresponding monodromy matrix after each turn around a cusp.

\subsection{Hyperbolic geodesics}
This first question arising to unravel this computation of Lyapunov
exponents is how to simulate a generic hyperbolic geodesic.  The
answer comes from a beautiful theorem proved by Caroline Series in
\cite{Series1985} which relates hyperbolic geodesics on the Poincaré
half-plane and continued fraction development of real numbers.
We follow here the notations of \cite{Dalbo} (see part II.4.1).\\

Let us consider the Farey tessellation of $\mathbb H$ (see Figure
\ref{Farey}).
\begin{figure}
  \centering
  \includegraphics[width=\linewidth]{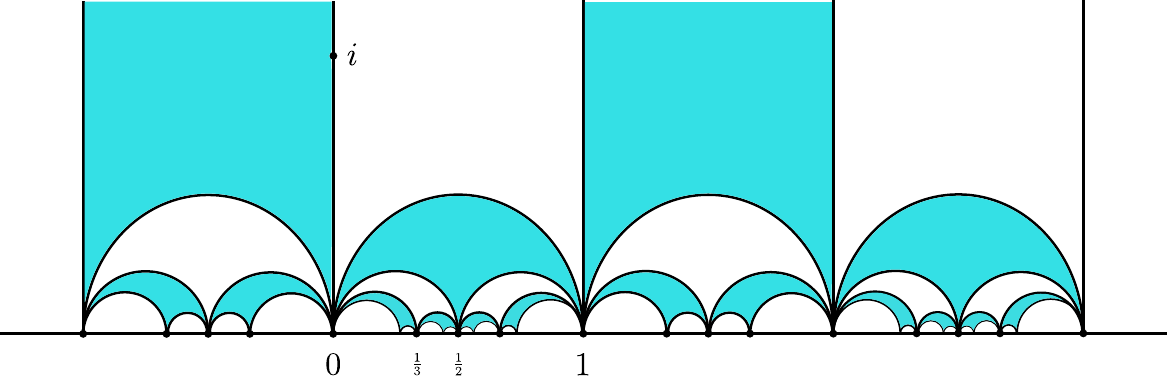}
  \caption{The Farey's tessellation}
  \label{Farey}
\end{figure}
It is the fundamental domain for the discrete subgroup of index $3$ in
$\PSLZ$ generated by
$$
\left< \left(
    \begin{array}{cc}
      1 & 1 \\
      0 & 1 \end{array}
  \right),
  \left(
    \begin{array}{cc}
      1 & 0 \\
      1 & 1
    \end{array}
  \right) \right>
$$

The sphere minus three points endowed with its complete hyperbolic
metric is a degree two cover of the surface associated to Farey's
tessellation.  This is why we represent the tessellation with two
colors : the fundamental domain for the sphere corresponds to two
adjacent triangles of different colors. That is why it will be easy
once we understand the geodesics with respect to this tessellation
to see them on the sphere.\\

Let us consider a geodesic going through $i$. It lands to the real
axis at a positive and a negative real number. The positive real
number will be called $x$, this number determines completely the
geodesic since we know two distinct points on it.

We associate to this geodesic a sequence of positive integers.  Look
at the sequence of hyperbolic triangles the geodesic will cross.  For
each one of those triangles, the geodesic has two ways to cross them
(see Figure 3). Once it enters it, it can leave it crossing either the
side of the triangle to its left (a) or to its right (b).

\begin{figure}[h]
  \begin{subfigure}[c]{.5\linewidth}
    \centering
    \includegraphics[height=2cm]{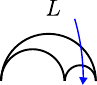}
    \subcaption{to the left}
  \end{subfigure}
  \begin{subfigure}[c]{.5\linewidth}
    \centering
    \includegraphics[height=2cm]{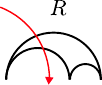}
    \subcaption{to the right}
  \end{subfigure}
  \caption{Two ways to cross an hyperbolic triangle}
\end{figure}

\begin{remark}
  The vertices of hyperbolic triangles are located at rational
  numbers, so this sequence will be infinite if and only if $x$ is
  irrational (see \cite{Dalbo} Lemme 4.2).
\end{remark}

We have now for a generic geodesic an infinite word in two letters $L$
and $R$ associated to a geodesic. For example the word associated to
the geodesic in Figure \ref{geodesic example}, is of the form
$LLRRLR\dots = L^2R^2L^1R\dots$. We can factorize each of those words
and get $$R^{n_0}L^{n_1}R^{n_2}L^{n_3}\dots$$ Except for $n_0$ which
can be zero the $n_i$ are positive integers.

\begin{theorem}
  The sequence $(n_k)$ is the continued fraction development of $x$.
  In other words,
  $$ x = n_0 + \cfrac 1 {n_1 + \cfrac 1 {n_2 + \cfrac 1 {n_3 + \dots}}} $$
  The measure induced on the real axis by the measure on
  $T^1 \mathbb H$ dominates Lebesgue measure.
\end{theorem}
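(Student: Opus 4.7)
The plan is to prove the theorem by matching a geometric recursion on geodesics with the arithmetic Gauss recursion $x \mapsto 1/\{x\}$ on $\mathbb R$, exploiting the $\PSLZ$-symmetry of the Farey tessellation. Both the tessellation and the cutting sequence code transform covariantly under this action, so one is reduced to computing the initial block of the coding and then iterating.

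First I would identify the initial block. Suppose $x > 1$ with $n_0 = \lfloor x \rfloor$. The geodesic from $i$ to $x$ must cross each of the Farey edges $[k, \infty]$ for $k = 1, 2, \ldots, n_0$, and between consecutive crossings it sits in the triangle with vertices $k-1$, $k$ and $\infty$. All these crossings share the cusp at $\infty$ on the same side of the geodesic, so by the convention of Figure 3 they contribute the same letter (say $R$), and the sequence begins with $R^{n_0}$. If $0 \leq x < 1$, no such crossings occur and $n_0 = 0$.

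Next, after this initial block the geodesic lies in the triangle with vertices $n_0$, $n_0+1$, $\infty$ and is about to cross the edge $[n_0, n_0+1]$ downward. I would apply an element $\gamma \in \PSLZ$ preserving the Farey tessellation whose action on the positive endpoint realises the Gauss map $x \mapsto 1/\{x\}$ (up to the symmetry exchanging the two ends of an unoriented geodesic, which one absorbs into the choice of basepoint). Under such a transformation the cutting sequence is transported covariantly with $L$ and $R$ interchanged, so the tail of the original cutting sequence past the block $R^{n_0}$ is the cutting sequence of a geodesic whose positive endpoint is $1/\{x\}$, and hence starts with $L^{n_1}$ where $n_1 = \lfloor 1/\{x\} \rfloor$ by applying the first step again. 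Iterating produces the continued fraction expansion of $x$.

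For the measure statement, the Liouville measure $y^{-2}\,dx\,dy\,d\theta$ on $T^1 \mathbb H$ descends to a finite invariant measure on the unit tangent bundle of the quotient surface. Its push-forward under the map recording the positive endpoint of each geodesic, suitably normalised through a transversal, yields a measure on $\mathbb R$ absolutely continuous with respect to Lebesgue and whose density is bounded below by a positive constant on any compact set. Composing with the Series coding, the induced measure on $[0,1)$ is equivalent to the Gauss measure $\tfrac{1}{\log 2}\tfrac{dx}{1+x}$, which dominates Lebesgue. The main obstacle is the orientation bookkeeping in the recursion step: one must verify that $\gamma$ interchanges $L$ and $R$ in precisely the way the continued fraction demands, and that the identification of the new positive endpoint with $1/\{x\}$ is legitimate in spite of the role played by the negative endpoint of the geodesic.
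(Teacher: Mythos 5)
Your sketch is essentially the classical Series argument, which is exactly the proof the paper invokes (it gives no proof of its own, citing \cite{Series1985} and \cite{Dalbo} II.4): the initial block $R^{n_0}$ read off from the fan of Farey triangles with vertex $\infty$, then iteration of the Gauss map $x \mapsto 1/\{x\}$ via a tessellation symmetry that transports cutting sequences with $L$ and $R$ interchanged, plus strict positivity of the visual-measure density $\tfrac{dx}{\pi(1+x^2)}$ for the statement about Lebesgue measure. One small correction to your bookkeeping: the symmetry realising $x \mapsto 1/\{x\}$ on forward endpoints while swapping $L$ and $R$ is orientation-reversing, e.g. $z \mapsto 1/(\bar z - n_0)$, so it lies in $\mathrm{PGL}_2(\mathbb Z) \setminus \PSLZ$ rather than in $\PSLZ$ as you wrote --- composing an element of $\PSLZ$ such as $z \mapsto -1/(z-n_0)$ with the end-swap of the unoriented geodesic reverses the word order and produces the continued fraction of $-1/x^-$ rather than the Gauss map on the forward endpoint, which is precisely the verification you flagged at the end.
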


See \cite{Dalbo} II.4 or \cite{Series1985} for a proof.

\begin{remark}
  This theorem states exactly that to study a generic geodesic on the
  hyperbolic plane, we can consider a Lebesgue generic number in
  $(0, \infty)$ and compute its continued fraction development.
\end{remark}

\begin{figure}[h]
  \begin{subfigure}[t]{\linewidth}
    \centering
    \includegraphics[width=\linewidth]{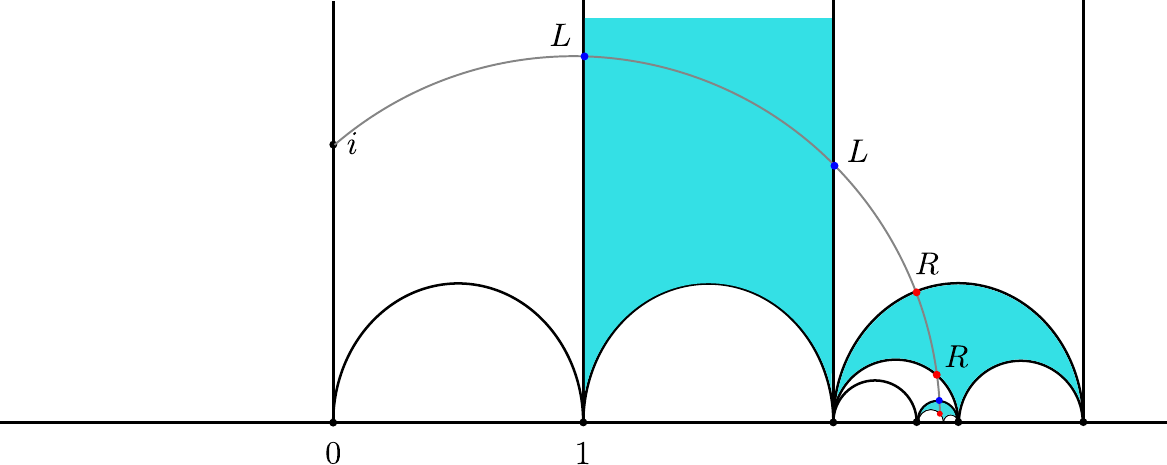}
  \end{subfigure}
  \caption{Crossings of a given geodesic}
  \label{geodesic example}
\end{figure}

To compute Lyapunov exponents of the flat bundle, we need to follow
how a generic geodesic winds around the cusps.  By the previous
theorem we can simulate a generic cutting sequence of an hyperbolic
geodesic in $\Hy$.  Our goal now will be to associate to such a
sequence a product of monodromy
matrices following its homotopy class.\\

Since we will consider universal cover of the sphere minus three
points, for convenience we will denote by $A$, $B$, $C$ the cusps
corresponding in the surface to $\infty$, $0$, $1$ respectively and
use this latter notation for points in $\Hy$. Two adjacent hyperbolic
triangles of Farey's tessellation, e.g. $0, 1, \infty$ and
$1, 2, \infty$ will form a fundamental domain for this surface. All
the vertices of the hyperbolic triangles for this tessellation are
associated to either $A, B$ or $C$. To follow how the flow turns
around these vertices in the surface, we will need to keep track of
orientation. To do so, we color the triangles according to the order
of its vertices, when we browse the three vertices counterclockwise if
we have $A \rightarrow B \rightarrow C \rightarrow A$ we color the
triangle in white (this is the case for $0,1,\infty$) otherwise
we color it in blue (case of $\infty, 2, 1$).\\

Let us now consider a point $P$ inside the blue triangle, which will
be used as a base point for an expression of the cycles around cusps.
We choose a homology marking of the surface by denoting the paths
going around $A$, $B$, $C$ counterclockwise starting and ending at
$P$, \; $a$, $b$, $c$ (see Figure \ref{marking}).  When we concatenate
these paths we get $c \cdot b \cdot a = \Id$ and $a^{-1} = c \cdot b$.
For monodromy matrices we will have the relation
\begin{equation}
  \label{monodromy}
  M_\infty M_0 M_1 = \Id
\end{equation}\\

\begin{figure}[h]
  \begin{subfigure}[c]{0.45\linewidth}
    \centering
    \includegraphics[width=.8\linewidth]{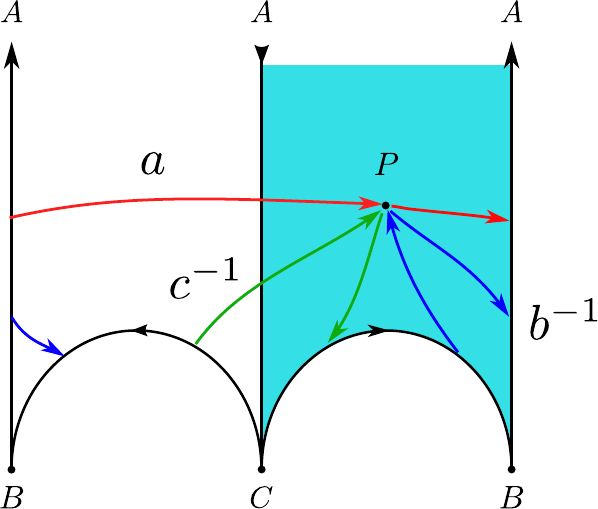}
  \end{subfigure}
  \begin{subfigure}[c]{.45\linewidth}
    \centering
    \includegraphics[width=.8\linewidth]{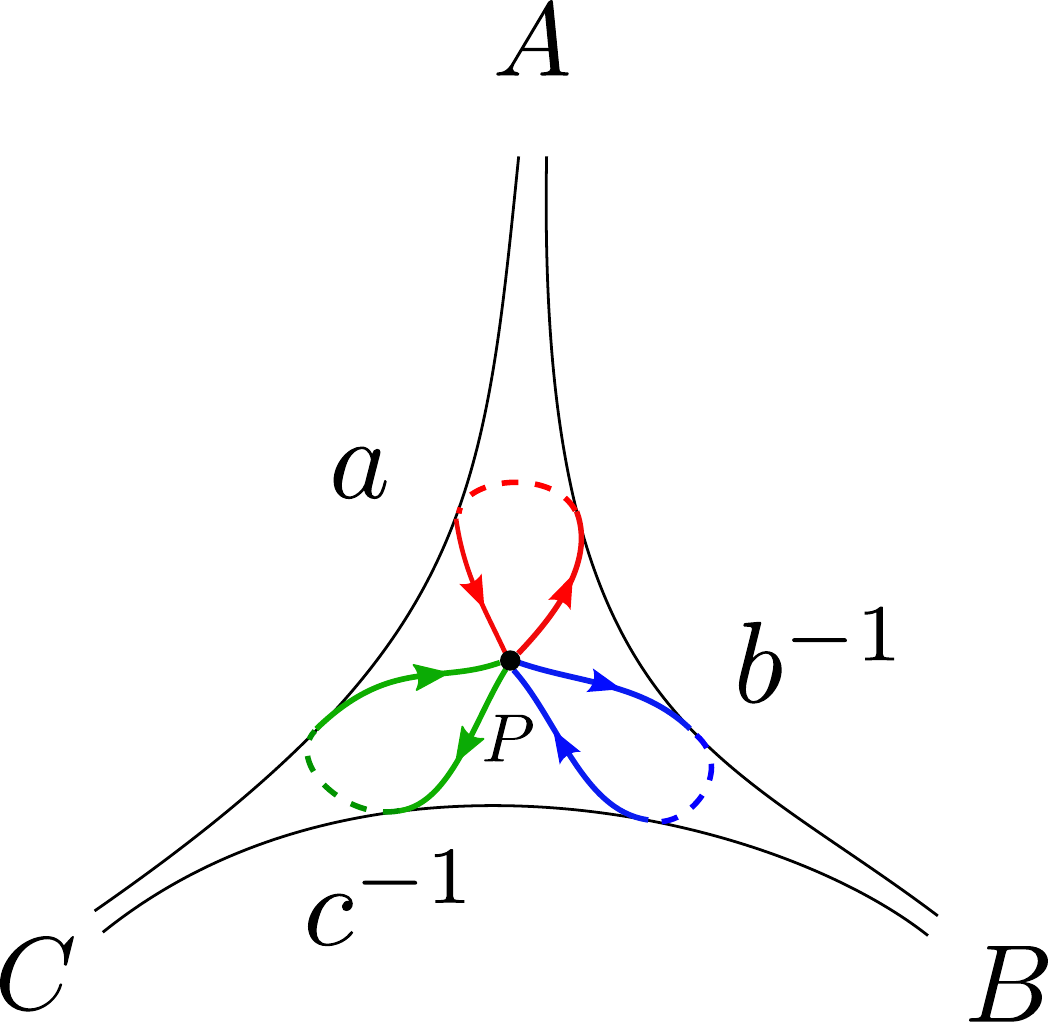}
  \end{subfigure}
  \caption{Homology marking}
  \label{marking}
\end{figure}

In our algorithm we will always follow the cutting sequence until we
end up to a blue triangle. Then we will apply an isometry that take
the fundamental domain we are in to the $(-1, 0, \infty)$ triangle and
the edge the flow will cut when going out of the triangle to be the
$(0 \infty)$ or $(-1, \infty)$ edge in order to place the cusp we are
turning around at $\infty$. We shall warn the reader here that the
corresponding cusp on the surface here at points $-1, 0$ and $\infty$
may be any of the points $A, B, C$ but their cyclic order will stay
unchanged thanks to the orientation. Thus we just need to keep track
of the cusp placed at $\infty$.

When we start with a cutting sequence extracted from the previous
theorem we see that the geodesic start by cutting $(0,\infty)$ at $i$
without being counted in the cutting sequence. The first cutting will
always be forgotten in the sequence when applying the isometry.

Now remark that when the crossing is a sequence of $2n$ left, we make
$n$ turns counterclockwise around the cusp placed at $\infty$. When it
$2n$ right, we make $n$ turn clockwise.  It is a little trickier if
the geodesic makes an odd number of the same crossing; we need to take
one step further from the next term in the sequence of
crossings to end up at $P$ (see Figure \ref{odd}).\\

\begin{figure}[h]
  \begin{subfigure}[c]{.45\linewidth}
    \centering
    \includegraphics[width=.9\linewidth]{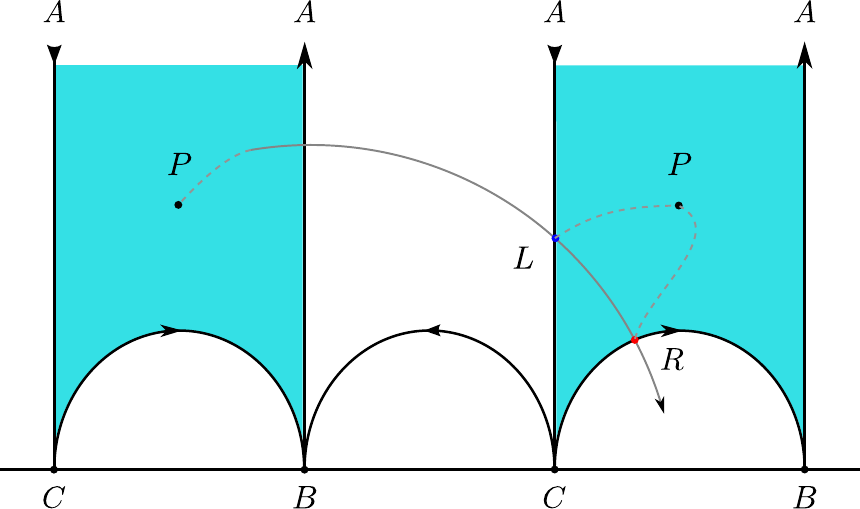}
    \subcaption{Odd number of sections}
  \end{subfigure}
  \begin{subfigure}[c]{.45\linewidth}
    \centering
    \includegraphics[width=.9\linewidth]{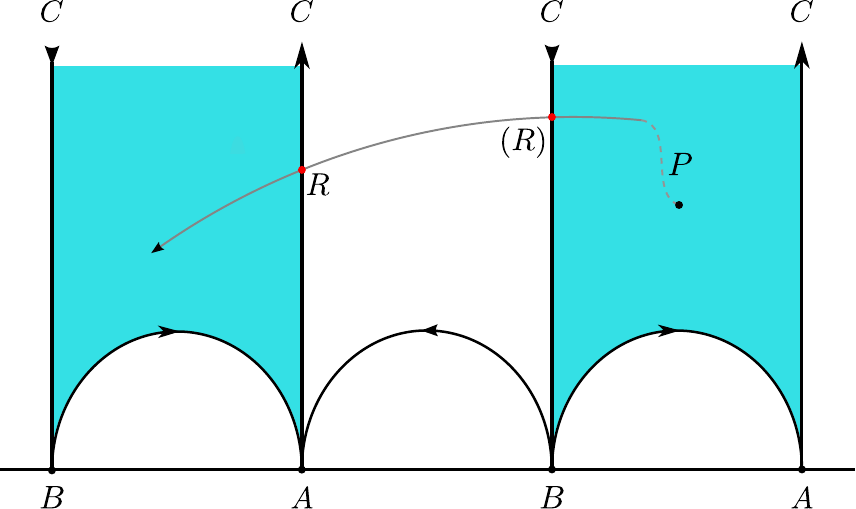}
    \subcaption{Changing the setting}
  \end{subfigure}
  \caption{Applying the good orientation preserving isometry}
  \label{odd}
\end{figure}

There is a last point to consider, since we want to compare the growth
of the harmonic norm with regards to the geodesic flow, need to follow
its length.  Here the discretized algorithm enables us to follow the
type of homotopy it will have, but the length will not correspond a
priori to the number of iterations of our algorithm.  It is
proportional to it by the constant.

\subsection{Monodromy matrices}
\label{secmonodromy}

In the introduction Proposition \ref{eigenvalues} gave a set of three
properties on the monodromy matrices for the hypergeometric
differential equation associated to two distinct sequences of real
numbers $\alpha_1, \dots, \alpha_n$ and $\beta_1, \dots, \beta_n$.  We
claim that those properties are sufficient to recover the monodromy
matrices up to conjugacy.

For convenience we always assume that the $\alpha_1, \dots, \alpha_n$
are disjoint, otherwise the computation becomes way more tedious, and
in our computations we will explore generic domains.  We choose a
basis in which $M_0$ is diagonal.  Property (3) tells us that
$M_1 - \Id$ is of rank $1$. We can then find two vectors $v$ and $w$
such that $M_1 = \Id + v w^t$.

Since $M_\infty^{-1} = M_0 M_1$ knowing the eigenvalues of $M_\infty$
we can derive the following $n$ equations, for all $j$,

\[
  \det \left(M_\infty^{-1} - \E{\beta_j} \Id \right) = 0\\
\]

We can compute this determinant using the particular form of the
matrix and the following lemma.
\[
  M_0 M_1 - \E{\beta_j} \Id = (M_0 - \E{\beta_j} \Id) + (M_0 v) w^t
\]

We can conjugate by diagonal matrices so that $M_0 v$ becomes the
vector $\bold 1$ which is one on every coordinates. And obtain the
equations

\[
  \det ((M_0 - \E{\beta_j} \Id) + \bold 1 w^t) = 0, \; \forall j
\]

\begin{lemma*}
  Let $D$ a diagonal matrix with $d_1, \dots, d_n$ on its diagonal,
  and $x$ a vector.
  $$\det \left( D + \bold 1 x^t \right) = \left(\prod_{i=1}^{n} d_i\right) \cdot \left(1 + \sum_{i=1}^n x_i/d_i \right)$$
\end{lemma*}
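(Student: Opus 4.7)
The plan is to recognize this as a direct instance of the matrix determinant lemma (Sylvester's identity for rank-one perturbations), which asserts that $\det(A + uv^t) = \det(A)\bigl(1 + v^t A^{-1} u\bigr)$ for any invertible matrix $A$ and column vectors $u, v$. Applied with $A = D$, $u = \mathbf{1}$, and $v = x$, one has $D^{-1}\mathbf{1} = (1/d_1, \dots, 1/d_n)^t$ and therefore $x^t D^{-1}\mathbf{1} = \sum_{i=1}^n x_i/d_i$. Combined with $\det(D) = \prod_i d_i$, this yields the claimed formula in one line. In the application of the lemma, the $d_i$ are eigenvalues $\E{\alpha_i}$ of $M_0$, hence nonzero, so no invertibility issue arises.

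If a self-contained proof is preferred, I would argue by multilinearity of the determinant in the rows. Row $i$ of $D + \mathbf{1} x^t$ has the form $d_i e_i^t + x^t$, so the determinant expands into $2^n$ terms indexed by subsets $S \subseteq \{1, \dots, n\}$: in the term corresponding to $S$, rows indexed by $S$ are taken to be $d_i e_i^t$ and the remaining rows are all the common vector $x^t$. Any term in which two or more rows equal $x^t$ vanishes by row repetition, so the only surviving terms are $S = \{1, \dots, n\}$, contributing $\prod_i d_i$, and the $n$ terms $S = \{1, \dots, n\} \setminus \{i\}$. In the latter, the matrix is diagonal except that row $i$ equals $x^t$; cofactor expansion along row $i$ (or direct inspection) gives the contribution $x_i \prod_{j \neq i} d_j$.

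Summing the surviving contributions yields $\prod_i d_i + \sum_i x_i \prod_{j \neq i} d_j$, and factoring $\prod_i d_i$ out of this expression gives exactly $\bigl(\prod_i d_i\bigr)\bigl(1 + \sum_i x_i/d_i\bigr)$. There is no genuine obstacle here: this is a standard linear algebra identity, and the only subtlety worth flagging is that the final formula as stated requires the $d_i$ to be nonzero (so that $x_i/d_i$ makes sense), which holds in the context where $D$ is a diagonalization of $M_0$.
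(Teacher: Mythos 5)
Your proof is correct, but it takes a genuinely different route from the paper. The paper reduces to the case $D = \Id$ by factoring $d_i$ out of each column and then argues spectrally: the matrix $\Id + \mathbf{1}x^t$ has eigenvalue $1$ with multiplicity $n-1$ (on the hyperplane annihilated by $x^t$) and the eigenvector $\mathbf{1}$ with eigenvalue $1 + \sum_i x_i$, so the determinant is the product of these. Your self-contained argument instead expands the determinant by multilinearity in the rows, kills the $2^n - n - 1$ terms with a repeated row $x^t$, and sums the survivors to get $\prod_i d_i + \sum_i x_i \prod_{j \neq i} d_j$ before factoring. Your version buys two things: it is a pure polynomial identity, valid over any commutative ring in the unfactored form $\det(D + \mathbf{1}x^t) = \prod_i d_i + \sum_i x_i \prod_{j\neq i} d_j$ (so the hypothesis $d_i \neq 0$ is only needed to divide at the very end, a point you rightly flag and which the paper leaves implicit in its column factoring); and it avoids the degenerate case $\sum_i x_i = 0$, where the paper's phrase ``an eigenvector whose image through $x^t$ is not zero'' is slightly loose --- there $\Id + \mathbf{1}x^t$ is unipotent and non-diagonalizable, though the formula still holds. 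The paper's spectral argument is shorter and more conceptual; your citation of the general identity $\det(A + uv^t) = \det(A)\left(1 + v^t A^{-1} u\right)$ is of course also legitimate and matches the application, where $d_i = \E{\alpha_i}$ is nonzero.
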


\begin{proof}
  First consider the case where $D$ is the identity matrix. We know
  that all the eigenvalues except for one are 1. The determinant will
  then be the eigenvalue of an eigenvector which image through $x^t$
  is not zero.  This vector will be $\bold 1$ and its eigenvalue
  $(1 + \sum_{i=1}^n x_i)$.  To finish the proof, just factor each
  column by $d_i$ in the determinant.
\end{proof}

We obtain
  $$\left(\prod_{i=1}^n \E{\alpha_i} - \E{\beta_j} \right) 
  \left( 1 + \sum_{i=1}^n \frac{w_i}{\E{\alpha_i} - \E{\beta_j}}
  \right) = 0$$

\begin{corollary*}
  The vector $w$ satisfies for all $j$,
  $$ \sum_{i=1}^n \frac{w_i}{\E{\beta_j} - \E{\alpha_i}} = 1 $$
\end{corollary*}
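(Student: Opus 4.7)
The plan is to read the corollary directly off the determinantal identity established just before its statement. Concretely, I have already obtained, for every $j$,
$$\left(\prod_{i=1}^n \bigl(\E{\alpha_i} - \E{\beta_j}\bigr)\right)\left(1 + \sum_{i=1}^n \frac{w_i}{\E{\alpha_i} - \E{\beta_j}}\right) = 0,$$
so the task is to isolate the second factor and rewrite it in the stated normalization.

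First I would justify that the leading product does not vanish. The standing hypothesis is that $\{\alpha_i\}$ and $\{\beta_j\}$ are disjoint sequences of real numbers, and in the generic domain we work in they also satisfy $\alpha_i - \beta_j \notin \Z$, so that the eigenvalues $\E{\alpha_i}$ of $M_0$ and $\E{\beta_j}$ of $M_\infty^{-1}$ are pairwise distinct. Hence each factor $\E{\alpha_i} - \E{\beta_j}$ is nonzero, and the entire product in front is nonzero.

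Consequently, for each $j$ the second factor must vanish, giving
$$1 + \sum_{i=1}^n \frac{w_i}{\E{\alpha_i} - \E{\beta_j}} = 0,$$
and changing the sign of the denominator yields exactly
$$\sum_{i=1}^n \frac{w_i}{\E{\beta_j} - \E{\alpha_i}} = 1,$$
which is the claim. There is no real obstacle here: all the content is in Proposition \ref{eigenvalues} (which supplies the spectrum of $M_\infty$), the lemma above (which turns $\det(M_\infty^{-1} - \E{\beta_j}\Id) = 0$ into a rational identity in $w$), and the disjointness hypothesis on the parameters (which lets us divide by the prefactor). The only thing one might want to add for completeness is a brief remark that the same conclusion persists on the boundary cases by continuity, but for the purpose of the algorithm we stay in the generic regime and the three-line argument above suffices.
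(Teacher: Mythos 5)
Your proof is correct and follows the same route the paper takes implicitly: the corollary is read off the displayed determinantal identity by noting the prefactor $\prod_{i=1}^n \left(\E{\alpha_i} - \E{\beta_j}\right)$ is nonzero and flipping the sign of the denominators. Your added remark that disjointness must hold modulo $\Z$ (so that $\E{\alpha_i} \neq \E{\beta_j}$) is a correct and worthwhile precision that the paper leaves implicit in its genericity assumption.
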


We define a matrix
$N = \left(\frac{1}{\E{\beta_j} - \E{\alpha_i}}\right)_{i,j}$ and
observe that $w^t N = \bold 1^t$ so $w^t = \bold 1^t N^{-1}$.  Hence
for a generic setting, we just have to invert $N$ to find the explicit
monodromies.
And finally we have the expression $M_1 = \Id + M_0^{-1} \bold 1 \cdot \bold 1 ^t N^{-1}$\\

\section{Observations}

\subsection{Calabi-Yau families example}
A first family of examples is coming from 14 1-dimensional families of
Calabi-Yau varieties of dimension $3$.  The Gauss-Manin connection for
this family on its Hodge bundle gives an example of the hypergeometric
family we are considering. The monodromy matrices were computed
explicitly in \cite{Enckevort2008} and have a specific form
parametrized by two integers $C$ and $d$.  We introduce the following
monodromy matrices,

$$T = \left(
  \begin{array}{cccc}
    1   & 0   & 0 & 0 \\
    1   & 1   & 0 & 0 \\
    1/2 & 1   & 1 & 0 \\
    1/6 & 1/2 & 1 & 1 \\
  \end{array}
\right) \hspace{1cm} S = \left(
  \begin{array}{cccc}
    1   & -C/12 & 0 & -d \\
    0   &     1 & 0 &  0 \\
    0   &     0 & 1 &  0 \\
    0   &     0 & 0 &  1 \\
  \end{array}
\right)$$

\vspace{.3cm}

\noindent In the previous notations,
$M_0 = T, M_1 = S, M_\infty = (TS)^{-1}$.  These matrices satisfy
relation \ref{monodromy}, $M_\infty M_0 M_1 = \Id$.  We see that
$M_1-\Id$ has rank one and eigenvalues of $M_0$ and $M_\infty$ have
module one thus correspond to hypergeometric equations.  In this
setting, $T$ has eigenvalues all equal to one and eigenvalues of
$(TS)^{-1}$ are symmetric with respect to zero, we denote them by
$\mu_1, \mu_2, -\mu_2, -\mu_1$ where $\mu_1, \mu_2 \geq 0$. \\

The parabolic degree of the holomorphic Hodge subbundles are given by,
\begin{theorem*}{\cite{EKMZ}}
  Suppose $0 < \mu_1 \leq \mu_2 \leq 1/2$ then the degree of the Hodge
  bundles are
  $$ \deg_{par} \mathcal E ^{3,0} = \mu_1 \ \text{ and } \ \deg_{par} \mathcal E^{2,1} = \mu_2$$
\end{theorem*}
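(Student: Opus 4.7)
The plan is to derive the Calabi-Yau formula as a specialization of Theorem 1.1 of the introduction (which gives parabolic degrees of Hodge pieces for a generic hypergeometric system), using the continuity lemma of Section 2.6 to handle the degeneracy inherent to the Calabi-Yau boundary.

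First, I would identify the hypergeometric data from the monodromies $T$ and $S$. Since $M_0 = T$ is unipotent with all eigenvalues equal to $1$, the parameters $\alpha_i$ all collapse to $0$ in the limit. Since $M_\infty = (TS)^{-1}$ has eigenvalues $e^{\pm 2\pi i \mu_1}, e^{\pm 2\pi i \mu_2}$, the four $\beta_j$ take the values $\{\mu_1, \mu_2, 1-\mu_2, 1-\mu_1\}$ in $[0,1)$, their circular order fixed by the hypothesis $0 < \mu_1 \leq \mu_2 \leq 1/2$.

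Second, I would perturb the $\alpha_i$ to small distinct positive values (say $\alpha_i = i\epsilon$) and apply Theorem 1.1. In this generic configuration, on the circle starting from $0$ one encounters the four $\alpha$'s and then the four $\beta$'s, so the intertwining diagram has $f(\alpha_i) = i$ and $f(\beta_j) = 4-j$ (labelling the $\beta_j$ by appearance), all Hodge numbers are $1$, and $\gamma$ tends to the integer $2$ from below so that $[\gamma]=1$ and $\{\gamma\}\to 1$. Applying the three bullets of Theorem 1.1 level by level: in the limit the $\alpha$-contributions vanish; the $\beta$-contributions $\sum_{f(\beta)=p-1}(1-\beta)$ produce $\mu_1$ for $p=1$ and $\mu_2$ for $p=2$; the $\{\gamma\}$-correction contributes $1$ at $p=[\gamma]+1=2$; and the third bullet provides $\delta^p$ by counting $\beta$-indices $\leq n - [\gamma]=3$. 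The result is $\deg_{par}\mathcal E^1=\mu_1$, $\deg_{par}\mathcal E^2=\mu_2$, $\deg_{par}\mathcal E^3=-\mu_2$, $\deg_{par}\mathcal E^4=-\mu_1$, summing to zero as demanded by rigidity; identifying $\mathcal E^1,\mathcal E^2$ with the Hodge pieces $\mathcal E^{3,0},\mathcal E^{2,1}$ (a check enabled by the parity matching of the pseudo-Hermitian signature noted in the remark after Fedorov's theorem) yields the stated formula.

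The main obstacle is that the Calabi-Yau data lies on the boundary of the generic parameter space in two ways: the coincidence of the $\alpha_i$ (so $M_0$ is a single Jordan block rather than diagonalizable, strictly outside the hypothesis of Theorem 1.1) and the integrality of $\gamma$ (placing us on a wall between cells of constant $[\gamma]$). The continuity lemma of Section 2.6 together with Corollary \ref{degenerate} ensures that parabolic degrees extend continuously to the unipotent cusp, but the side of the wall from which we take the limit must be chosen carefully; the perturbation $\alpha_i>0$ selects the cell $[\gamma]=1$, and consistency with VHS duality (symmetric Lyapunov spectrum and vanishing total degree) confirms this is the correct side.
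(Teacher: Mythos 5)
You should know at the outset that the paper contains no proof of this statement: it is quoted verbatim from \cite{EKMZ}, where it is obtained by a direct computation with the explicit Hodge filtration of the Calabi--Yau families (identifying $\mathcal F^3$ with the holomorphic solution line and reading off local exponents). So your strategy --- specializing the paper's Theorem 1.1 by perturbing $\alpha_i = i\epsilon$ and passing to the limit --- is a genuinely different route, and its generic part is correct: I verified that the diagram is $\alpha\alpha\alpha\alpha\beta\beta\beta\beta$ with $f(\alpha_i)=i$, $f(\beta_j)=4-j$, that $\gamma = 2-10\epsilon$ gives $[\gamma]=1$ and $\{\gamma\}\to 1$, that the third bullet yields $\delta^p = (0,-1,-1,-1)$ for $p=1,\dots,4$, and that the limits of the formula are indeed $(\mu_1,\mu_2,-\mu_2,-\mu_1)$. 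If completed, this would be a nice consistency check between Theorem 1.1 and \cite{EKMZ}.

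However, the limit step is a genuine gap, and it is precisely where the difficulty lies. The continuity lemma of Section 2.6 only proves \emph{constancy on open cells} (all parameters distinct, non-integer, $\gamma\notin\Z$); it says nothing about continuity up to the boundary. Corollary \ref{degenerate} does not apply either: its hypothesis is $T \to \mathrm{Id}$, whereas at the Calabi--Yau point $M_0$ is a full unipotent Jordan block and $M_1$ is unipotent non-trivial, so $\log T \not\to 0$. Worse, along your own path a parabolic weight crosses an integer: $\{\gamma\} = 1-10\epsilon \to 1$ while the weight at $z=1$ of the limit system is $0$, so the Deligne extension jumps at the wall, and continuity of $\deg_{par}$ there is exactly the claim that $\delta^p$ jumps by $+1$ to compensate --- an elementary-modification statement that nothing in the paper supplies. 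The danger is not hypothetical: perturbing instead with $\alpha_i$ just below $1$ gives the same intertwining diagram but $\gamma = 2+10\epsilon$, $[\gamma]=2$, and the formula's limit for $p=1$ becomes $1+\mu_1$, not $\mu_1$; the two one-sided limits of the naive formula disagree by integers because weights wrap at the cusps on either side, so ``choosing the correct side by duality'' is circular. One needs either uniform control of the harmonic metrics (hence of the Chern--Weil integrals) along the degeneration, or the direct computation of \cite{EKMZ}. Two smaller gaps: your identification of $p=1,2$ with $\mathcal E^{3,0}, \mathcal E^{2,1}$ via signature parity is vacuous here, since the signature is $(2,2)$ whichever way the grading is read --- it cannot distinguish $\deg_{par}\mathcal E^{3,0}=\mu_1$ from $-\mu_1$; you need, e.g., Griffiths positivity of the top Hodge piece or the explicit flat section. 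And the hypothesis allows $\mu_1=\mu_2$ and $\mu_2 = 1/2$, where the $\beta$'s also collide, adding further boundary strata your perturbation does not reach.
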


Thus according to the same article, we know that $2(\mu_1 + \mu_2)$ is
a lower bound for the sum of Lyapunov exponents.  We call good cases
the equality cases and bad cases the cases where
there is strict inequality.\\

There are 14 different couples of values for $C$ and $d$ where the
corresponding flat bundle is an actual Hodge bundle over a family of
Calabi-Yau varieties. These examples where computed few year ago by
M. Kontsevich and were a motivation for this article. We list them in
the table below.

\begin{figure}[htt!]
  \begin{subfigure}[c]{.45\linewidth}
    \centering \hspace*{-.5cm}
    \begin{tabular}{|c|c|c|c|c|}
      \hline
      C  & d  & $\lambda_1 + \lambda_2$ & $\lambda_1$ & $\mu_1, \mu_2$ \\ \hline \hline
      46 & 1  & 1                      & 0.97        & 1/12, 5/12     \\ \hline
      44 & 2  & 1                      & 0.95        & 1/8, 3/8       \\ \hline
      52 & 4  & 4/3                    & 1.27        & 1/6, 1/2       \\ \hline
      50 & 5  & 6/5                    & 1.12        & 1/5, 2/5       \\ \hline
      56 & 8  & 3/2                    & 1.40        & 1/4, 1/2       \\ \hline
      60 & 12 & 5/3                    & 1.53        & 1/3, 1/2       \\ \hline
      64 & 16 & 2                      & 1.75        & 1/2, 1/2       \\ \hline
    \end{tabular}
    \subcaption{The 7 good cases}
  \end{subfigure}
  \begin{subfigure}[c]{.45\linewidth}
    \centering \hspace*{.5cm}
    \begin{tabular}{|c|c|c|c|c|}
      \hline
      C  & d  & $\lambda_1 + \lambda_2$ & $\lambda_1$ & $\mu_1, \mu_2$ \\ \hline \hline
      22 & 1  & 0.92                   & 0.75        & 1/6, 1/6       \\ \hline
      34 & 1  & 0.83                   & 0.77        & 1/10, 3/10     \\ \hline
      32 & 2  & 0.97                   & 0.84        & 1/6, 1/4       \\ \hline
      42 & 3  & 1.06                   & 0.96        & 1/6, 1/3       \\ \hline
      40 & 4  & 1.30                   & 1.07        & 1/4, 1/4       \\ \hline
      48 & 6  & 1.31                   & 1.15        & 1/4, 1/3       \\ \hline
      54 & 9  & 1.60                   & 1.34        & 1/3, 1/3       \\ \hline
    \end{tabular}
    \subcaption{The 7 bad cases}
  \end{subfigure}
  \caption{Experiments}
\end{figure}

To see what happens in a similar setting for more general
hypergeometric equations, we vary $C, d$ and compute the corresponding
eigenvalues $\mu_1$ and $\mu_2$ as well as the Lyapunov exponents.  On
Figure \ref{bad and good} we drew a blue point at coordinate
$(\mu_1,\mu_2)$ if the sum of positive Lyapunov exponents are as close
to the parabolic degree $2(\mu_1 + \mu_2)$ as the precision we have
numerically and we put a red point when this value is outside of the
confidence interval.

\begin{figure}[h]
  \begin{subfigure}[c]{.45\linewidth}
    \centering \hspace*{-1cm}
    \includegraphics[height=5cm]{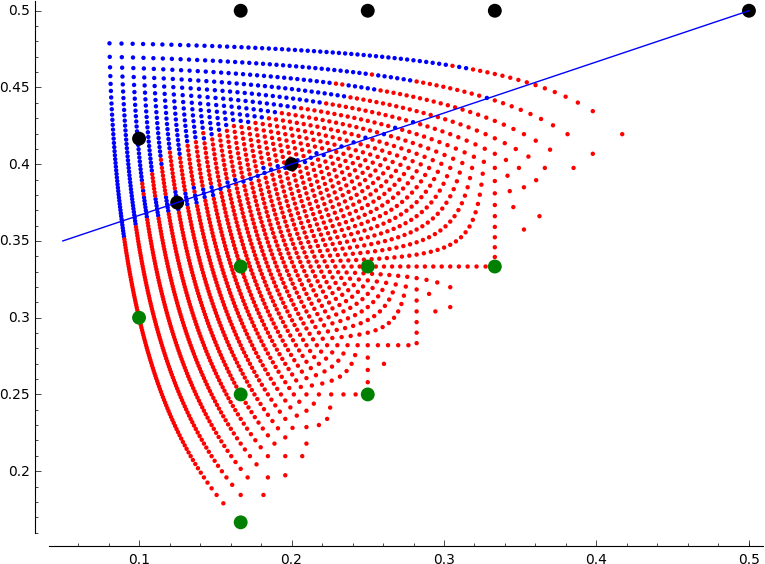}
    \subcaption{The good and bad cases}
    \label{bad and good}
  \end{subfigure}
  \begin{subfigure}[c]{.45\linewidth}
    \centering \hspace*{1cm}
    \includegraphics[height=5cm]{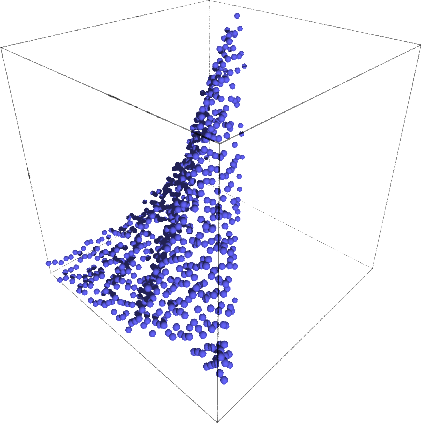}
    \subcaption{Zoom on the part above the line}
    \label{good}
  \end{subfigure}
  \caption{Experiments}
\end{figure}

Note that according to Figure \ref{bad and good} it seems that all
points below the line of equation $3\mu_2 = \mu_1 + 1$ are bad cases.
In Figure \ref{good}, we represent the distance of the sum of the
Lyapunov exponents to the expected formula. We see that this gives a
function that oscillates above zero. More precisely, it seems that
good cases are outside of some lines passing through $(1/2,1/2)$.

To push the numerical simulations further, we consider what happens on
lines of equation $3\mu_2 = \mu_1 + 1$ \ref{line_one} and
$48\mu_2 = 10\mu_1 + 19$ \ref{line_two} both passing through
$(1/2, 1/2)$ and a point corresponding to one of the previous good
cases.

\begin{figure}[h]
  \begin{subfigure}[c]{.45\linewidth}
    \includegraphics[height=4cm]{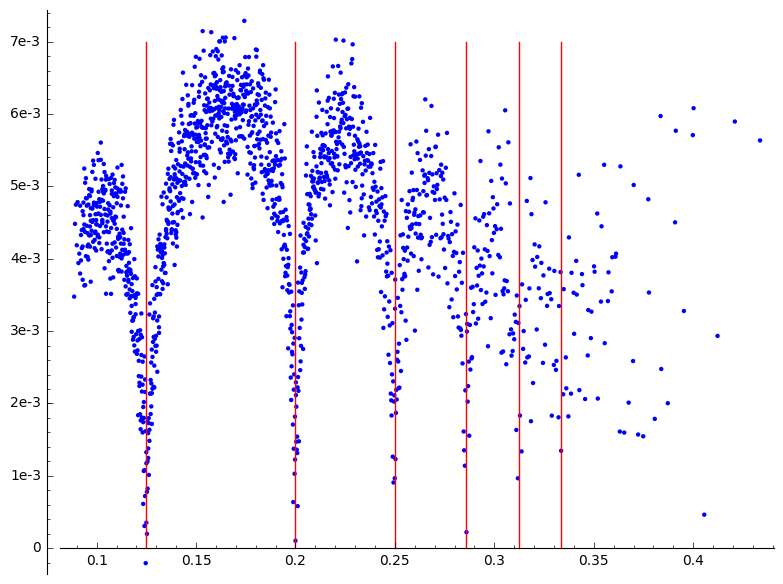}
    \subcaption{ $3\mu_2 = \mu_1 + 1$ }
    \label{line_one}
  \end{subfigure}
  \begin{subfigure}[c]{.45\linewidth}
    \includegraphics[height=4cm]{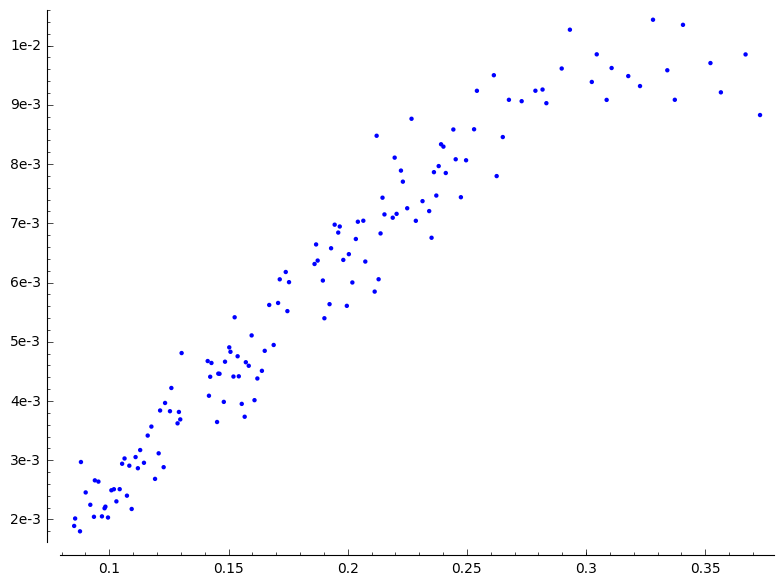}
    \subcaption{$48\mu_2 = 10\mu_1 + 19$}
    \label{line_two}
  \end{subfigure}
  \caption{Lyapunov exponents in function of $\mu_1$}
\end{figure}

We observe that on the graph \ref{line_two} there is only one good
case which corresponds to $(\mu_1, \mu_2) = (1/10, 3/10)$ in the
previous list of good cases. In the graph \ref{line_one}, there are
good cases at points $(\mu_1, \mu_2) = (1/8, 3/8), (1/5, 2/5)$ which
were also on the previous list but other points appear such as
$(3/12, 5/12), (5/16, 7/16), (3/9, 4/9)$.

According to \cite{BT14} and \cite{SV14}, the 7 good cases correspond
to cases where the monodromy group of the hypergeometric local system
is of infinite index in $Sp(4,\Z)$, which is commonly called
\textit{thin}. In the other cases the group is of finite index and is
called \textit{thick}.  The three good cases we found by ways of
Lyapunov exponents do not seem to have a representation with integers
$C$ and $d$. A lot of questions arise about these points, for example
can we find a number-theoretic interpretation of their equality as in
Conjecture 6.5 in \cite{EKMZ}.

\subsection{Examples for $n=2$}
\label{n=2}

Has we have seen in the introduction the two Lyapunov exponents are
symmetric $\lambda_1$ and $-\lambda_1$.  The sum of the positive
Lyapunov exponents is just $\lambda_1$. The parameter space we have
for these $2$-dimensional flat bundles are
$\alpha_1, \alpha_2, \beta_1, \beta_2$.

The Lyapunov exponents are invariant through translation of the set of
parameters. Indeed, we can consider the bundle with $e^\delta M_0$ and
$e^{-\delta} M_\infty$ monodromies, it will have the same set of
Lyapunov exponents since both scalar will appear with the same
frequency and its parameters will be
$\alpha_1 + \delta, \dots, \alpha_h + \delta, \beta_1 + \delta, \dots,
\beta_h + \delta$ hence without loss of generality we can assume
$\beta_1 = 0$.  Moreover the parameters are given as a set, the order
does not matter.

In the following experiments we will consider a set of parameters
where the $\beta$'s will be equidistributed and the $\alpha$'s will be
shifted with respect to them.  Here we represent the value of the
Lyapunov exponent for
$\alpha_1 = r, \alpha_2 = 2r, \beta_1 = 0, \beta_2 = x$ and we have by
definition $\gamma = x-3r$.

\begin{figure}[h]
  \begin{subfigure}[c]{.45\linewidth}
    \centering
    \includegraphics[width=\linewidth]{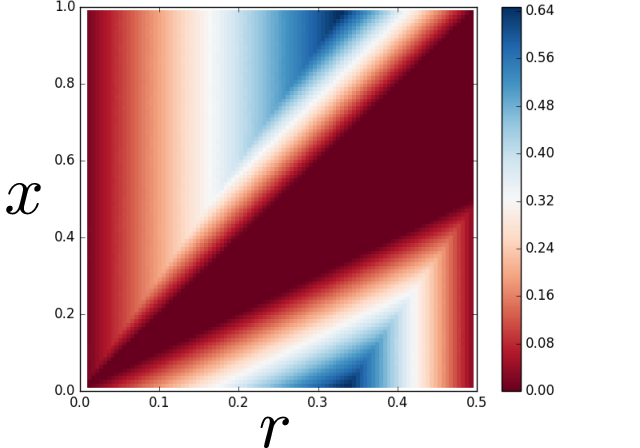}
    \subcaption{Plotting $\lambda_1$}
    \label{plot}
  \end{subfigure}
  \begin{subfigure}[c]{.45\linewidth}
    \centering
    \includegraphics[width=\linewidth]{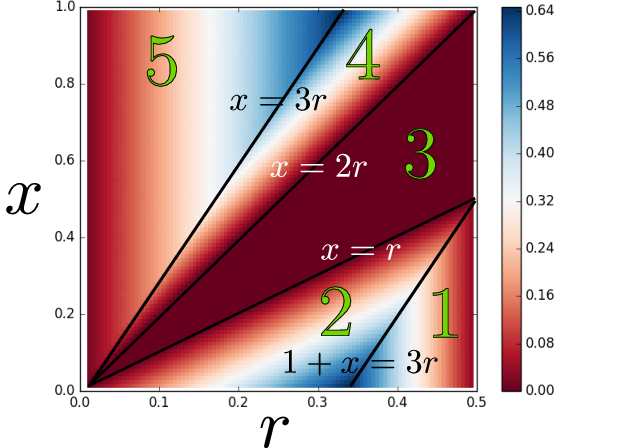}
    \subcaption{Zones on the graph}
    \label{zones}
  \end{subfigure}
  \caption{Experiments}
\end{figure}

\begin{remark}
  We first notice that the zone where the Lyapunov exponent is zero
  corresponds to the setting where the parameters are alternate and
  where there is a positive definite bilinear form invariant by the
  flat connection (see introduction).  This will be true whenever the
  VHS has weight $0$.

  Another noticeable fact is that zones correspond exactly to
  different combinatorics for the order of the $\alpha$ and $\beta$,
  and on $[ \gamma ]$ introduced in the introduction.

  \begin{figure}[h]
    \begin{subfigure}[c]{.45\linewidth}
      \centering
      \includegraphics[width=.9\linewidth]{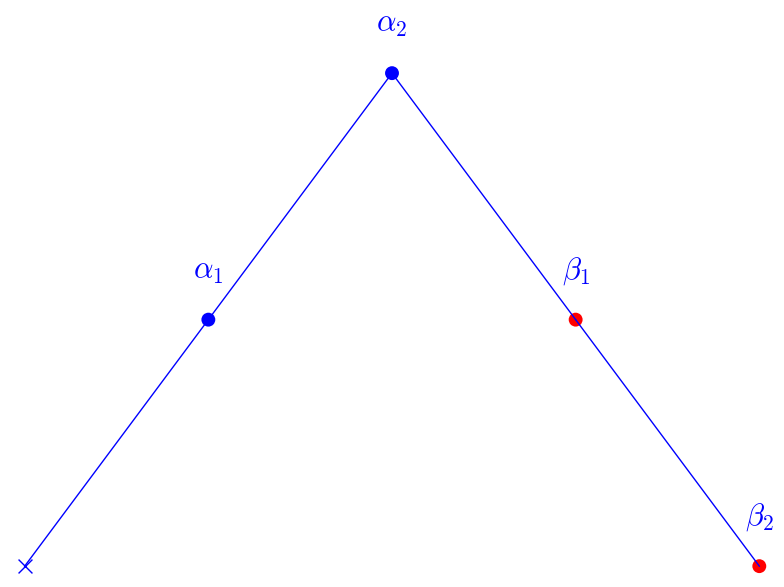}
      \subcaption{Diagram for zones 1 and 2}
      \label{plot}
    \end{subfigure}
    \begin{subfigure}[c]{.45\linewidth}
      \centering
      \includegraphics[width=.9\linewidth]{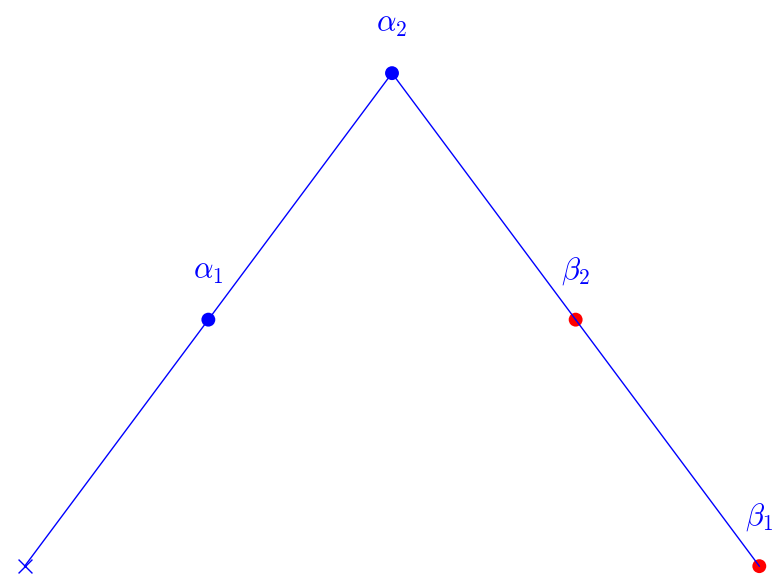}
      \subcaption{Diagram for zones 4 and 5}
      \label{zones}
    \end{subfigure}
  \end{figure}
\end{remark}

\noindent Remark that $[\gamma]$ is $0$ in zones $1, 4$, and $1$ in
zones $2,5$.  In the following table, we give a relation binding
$\lambda_1, r, x$ obtained by linear regression. The other column is
the formula for the parabolic degree in the given zone.

\begin{figure}[h]
  \centering
  \begin{tabular}{|c|c|c|}
    \hline
    Zone & $\lambda_1$ & $\deg_{par} H^{1,0}$ \\ \hline \hline
    1    & $2(1-2r)$   & $-1 + \{\gamma\} + \alpha_1 + 1 - \beta_2$\\ \hline
    2    & $2(r-x)$    & $\alpha_1 + 1 -\beta_2$\\ \hline
    3    & $0$         & $0$ \\ \hline
    4    & $2(x-2r)$   & $-1  + \{\gamma\} + \alpha_1 + 1-\beta_1$\\ \hline
    5    & $2r$        & $\alpha_1 + 1-\beta_1$ \\ \hline
  \end{tabular}
\end{figure}

\noindent In this case, the VHS is of weight $\leq 1$ and thus is in
the setting of \cite{Kontsevich}. In consequence, we have the equality
$$ \lambda_1 = 2 \frac {\deg_{par}{\mathcal E^{1}}} {\chi (S)}$$

\noindent Where $\deg_{par}$ is the parabolic degree of the
holomorphic bundle and $\chi(S) = 1$ the Euler characteristic of $S$.\\

\noindent This is a good test for our algorithm and formula on
degree. More generally, for any dimension $n$, this formula will hold
as long as the weight is equal to $1$.

\subsection{A peep to weight $2$}

Let $n$ be equal to $3$. In this case, there will be three Lyapunov
exponents $\lambda_1, 0, -\lambda_1$.  As explained in the previous
subsection, if the weight of the VHS is $0$, $\lambda_1 = 0$; if it is
$1$, $\lambda_1$ is equal to twice the parabolic degree of
$\mathcal E^1$.  We consider configurations where the weight is $2$.
Assume $\alpha_1 = 0$, the only cyclic order in which the VHS is
irreducible and of weight $2$ is for,
$$0 = \alpha_1 < \alpha_2 < \alpha_3 < \beta_1 < \beta_2
< \beta_3 < 1$$

\begin{figure}[h]
  \begin{subfigure}[l]{.4\linewidth}
    \includegraphics[width=\linewidth]{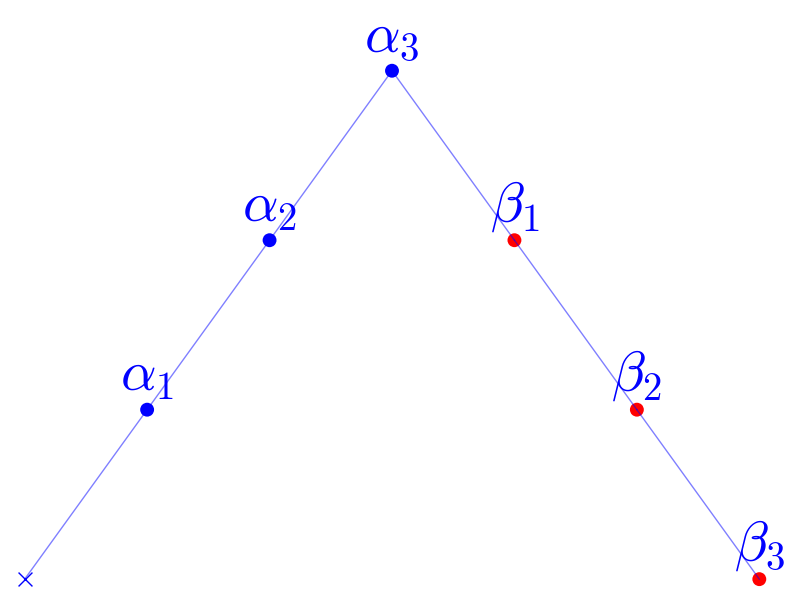}
    \hspace{2cm}
  \end{subfigure}
  \begin{subfigure}[r]{.4\linewidth}
    \hspace{1cm}
    \includegraphics[width=\linewidth]{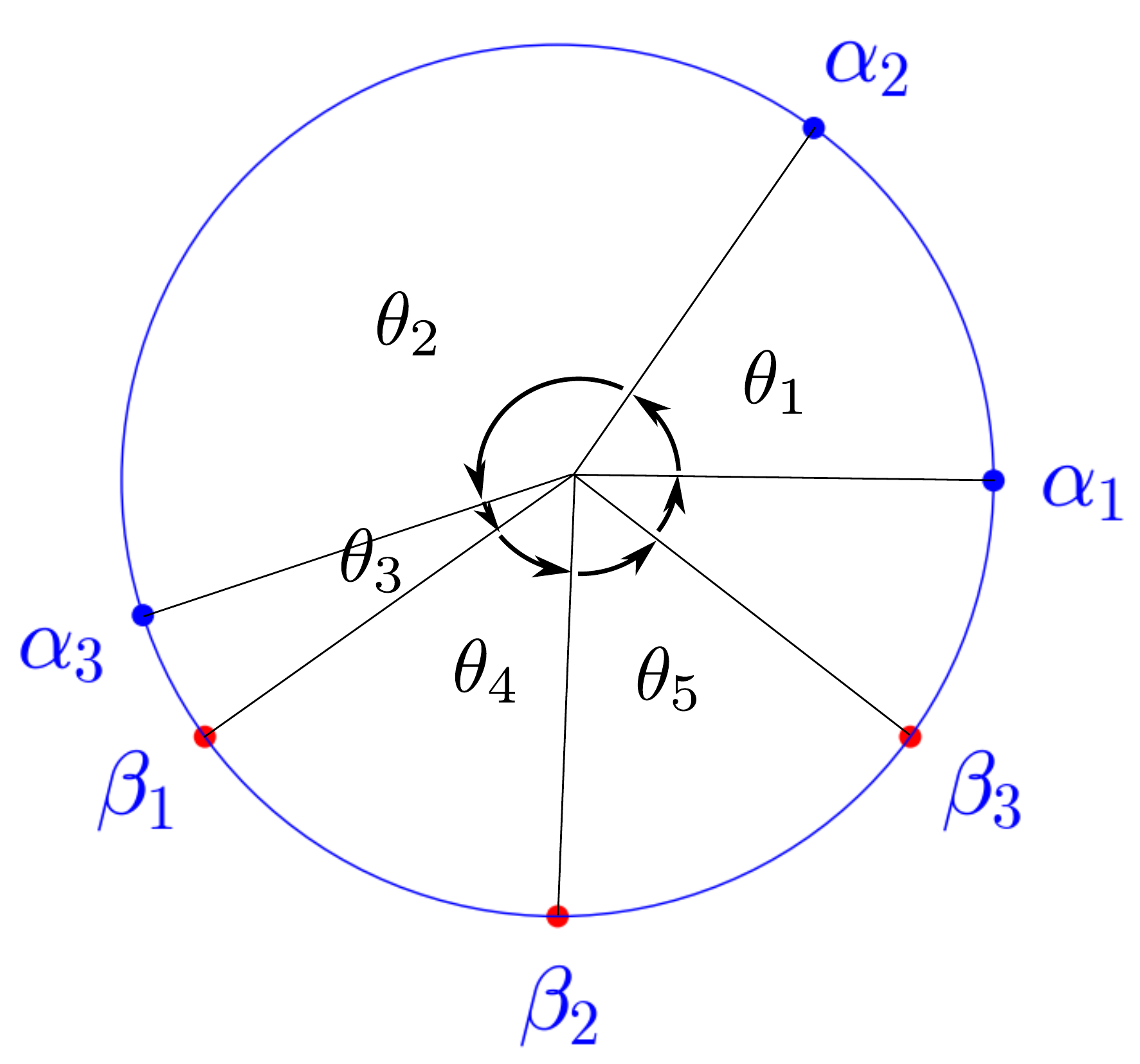}
  \end{subfigure}
\end{figure}

\noindent We parametrize these configurations with $5$ parameters
which will correspond to the distance between two consecutive
eigenvalues :
$\theta_1 = \alpha_2 - \alpha_1, \theta_2 = \alpha_3 - \alpha_2,
\theta_3 = \beta_1 - \alpha_3, \theta_4 = \beta_2 - \beta_1, \theta_5
= \beta_3 - \beta_2$.\\

\noindent Using a Monte-Carlo process, we found some values in this
configuration for which there is equality with twice the parabolic
degree of $\mathcal E^{2} \oplus \mathcal E^1$. We remarked that
several parameter points where there is equality satisfy
$\theta_1 = \theta_2$ and $\theta_4=\theta_5$.  This motivated us to
consider the $2$ dimensional subspace of parameters
$$(\theta_1, \theta_2, \theta_3, \theta_4, \theta_5) = (x, x, 1/2, y, y)$$
For these parameters we can observe a remarkable phenomenon; the
difference between the Lyapunov exponent and the formula with
parabolic degrees depends only on $x+y$. We plot this difference in
the Figure below and see that for some values of $x+y$ there is
equality.

\begin{figure}[h]
  \begin{subfigure}[l]{.4\linewidth}
    \includegraphics[width=\linewidth]{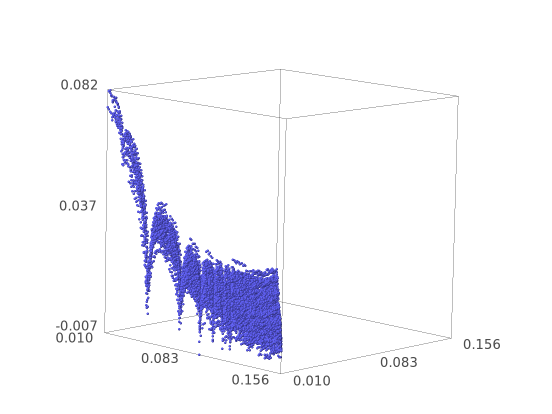}
    \subcaption{side}
  \end{subfigure}
  \begin{subfigure}[r]{.4\linewidth}
    \includegraphics[width=\linewidth]{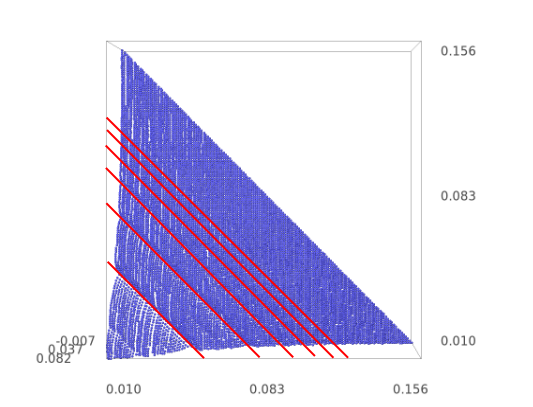}
    \subcaption{top}
  \end{subfigure}
\end{figure}

We computed that for $x+y = 25/3, 50/9 \text{ or } 1/10$ the formula
holds.

\newpage
\bibliographystyle{alpha} \bibliography{biblio}
\end{document}